\documentclass[11pt]{article}
\usepackage{amssymb,latexsym,times}
\usepackage[dvips]{color}
\usepackage{mathptmx}
\usepackage{verbatim}
\usepackage{amsfonts}
\usepackage{amsthm}
\usepackage{amscd}
\usepackage[mathscr]{eucal}
\textheight 230mm
\textwidth 150mm
\hoffset -16mm
\voffset -16mm
\catcode`\@=11
\renewcommand\subsection{\@startsection{subsection}{2}{\z@}%
                                     {-3.25ex\@plus -1ex \@minus -.2ex}%
                                     {-0.01 mm}
                                     {\normalfont\large\bfseries}}

\catcode`\@=11
\renewcommand\subsubsection{\@startsection{subsubsection}{2}{\z@}%
                                     {-3.25ex\@plus -1ex \@minus -.2ex}%
                                     {-0.01 mm}
                                     {\normalfont\bfseries}}
\newtheorem{theorem}{Theorem}[section]
\newtheorem{example}[theorem]{Example}
\newtheorem{definition}[theorem]{Definition}
\newtheorem{proposition}[theorem]{Proposition}
\newtheorem{lemma}[theorem]{Lemma}
\newtheorem{remark}[theorem]{Remark}

\def\cqfd{\hfill $\Box$ \bigskip}
\def\adots{\mathinner{\mkern2mu\raise1pt\hbox{.}
\mkern3mu\raise4pt\hbox{.}\mkern1mu\raise7pt\hbox{.}}}
\def\<{\langle\,}
\def\>{\,\rangle}

\def\ie{{\em i.e.\,}}

\def\a{\alpha}

\def\b{\beta}

\def\N{{\mathbb N}}
\def\Z{{\mathbb Z}}
\def\C{{\mathbb C}}

\def\Q{{\mathbb Q}}
\def\K{\mathcal{K}}

\def\F{{\cal F}}
\def\P{{\cal P}}

\def\hI{\widehat{I}\,}

\def\SS{{\cal S}}

\def\g{\mathfrak g}

\def\A{{\cal A}}

\def\<{\langle}
\def\>{\rangle}

\def\CC{{\cal C}}

\def\bchi{\widetilde{\chi}}

\def\M{{\cal M}}
\def\RR{{\cal R}}

\def\le{\leqslant}
\def\ge{\geqslant}

\def\xx{{\mathbf x}}

\def\zz{{\mathbf z}}
\def\Y{{\mathcal Y}}

\def\isom{\stackrel{\sim}{\rightarrow}}

  
\title{\bf Monoidal categorifications of cluster algebras \\ of type $A$ and $D$}
\author{David Hernandez and Bernard Leclerc}
\date{}

\begin{document}
\maketitle

\begin{center}
{\it To M. Jimbo on his 60th birthday}
\end{center}

\begin{abstract} In this note, we introduce monoidal subcategories of the tensor category of finite-dimen\-sio\-nal
representations of a simply-laced quantum affine algebra, parametrized by arbitrary Dynkin quivers. 
For linearly oriented quivers of types $A$ and $D$, we show that these categories provide monoidal 
categorifications of cluster algebras of the same type. 
The proof is purely representation-theoretical, in the spirit of \cite{HL}.
\end{abstract}

\setcounter{tocdepth}{1}
{\footnotesize \tableofcontents}

\section{Introduction}


The theory of cluster algebras has received a lot of attention in the recent years
because of its numerous connections with many fields, in particular Lie theory
and quiver representations.

One important problem is to categorify cluster algebras.
In recent years, many examples of {\it additive} categorifications of cluster algebras have been
constructed. 
The concept of a {\it monoidal} categorification of a cluster algebra, which is quite different,
was introduced in 
\cite[Definition 2.1]{HL}. If a cluster algebra has a monoidal categorification,
we get informations on its structure (positivity, linear independence of cluster monomials).
Conversely, if a monoidal category is a monoidal categorification
of a cluster algebra of finite type, we can calculate the
factorization of any simple object as a tensor product of finitely many prime objects, 
as well as the composition factors of a tensor product of simple objects.

In \cite{HL} we have introduced a certain monoidal subcategory
$\mathcal{C}_1$ of the category $\mathcal{C}$ of finite-dimensional representations
of a simply-laced quantum affine algebra, and we have conjectured that $\mathcal{C}_1$ 
is a monoidal categorification of a cluster algebra of the same type. This conjecture was proved
in \cite{HL} for types $A$ and $D_4$, and in \cite{Ncl} for all $A, D, E$ types. 
The proof in \cite{HL} relies on representation theory, and on the well-developed combinatorics of
cluster algebras of finite type. Nakajima's proof is different
and uses additional geometric tools: a tensor category of perverse sheaves on quiver varieties,
and the Caldero-Chapoton formula for cluster variables.

The categories $\CC_1$ of \cite{HL} are associated with bipartite Dynkin quivers.
In this note, we introduce monoidal subcategories $\mathcal{C}_\xi$ of $\mathcal{C}$ 
associated with arbitrary Dynkin quivers. 
For types $A$ and $D$, we show that the categories $\CC_\xi$ corresponding to
linearly oriented quivers provide new monoidal categorifications
of cluster algebras of the same type. The proof is similar to \cite{HL}. However, the main calculations are 
much simpler because, for these choices of $\xi$, the irreducibility criterion for products of prime representations 
is more accessible than for the categories $\mathcal{C}_1$. This is why we can 
also treat in this note the cases $D_n$ ($n\geq 5$).

In his PhD thesis, Fan Qin \cite{Q} has recently generalized the geometric approach of Nakajima
(partly in collaboration with Kimura), and obtained monoidal categorifications of cluster algebras associated with an arbitrary
acyclic quiver (not necessarily bipartite) using perverse sheaves on quiver varieties.

\medskip

{\bf Acknowledgments :} The first author would like to thank A. Zelevinsky for explaining the results in \cite{yz, y}.
The authors are grateful to the referee for useful comments.

\section{Cluster algebras and their monoidal categorifications}\label{sect3}

We refer to \cite{FZSurv, K} for excellent surveys on cluster algebras.

\subsection{}\label{defcluster}
Let $0\le n < r$ be some fixed integers.
If $\widetilde{B} = (b_{ij})$ is an $r \times (r-n)$-matrix
with integer entries, then
the {\it principal part} $B$ of 
$\widetilde{B}$ is the square matrix obtained from 
$\widetilde{B}$ by deleting the last $n$ rows.
Given some $k \in [1,r-n]$ define a new $r \times (r-n)$-matrix 
$\mu_k(\widetilde{B}) = (b_{ij}')$ by
\begin{equation}
b'_{ij}=
\left\{
\matrix{
-b_{ij}\hfill & \mbox{if $i=k$ or $j=k$},\cr 
b_{ij} + {\displaystyle\frac{|b_{ik}|b_{kj} + b_{ik}|b_{kj}|}{2}} & \mbox{otherwise},
}
\right.
\end{equation}
where $i \in [1,r]$ and $j \in [1,r-n]$.
One calls $\mu_k(\widetilde{B})$ the {\it mutation} of the matrix $\widetilde{B}$
in direction $k$.
If $\widetilde{B}$ is an integer matrix whose principal part is
skew-symmetric, then it is 
easy to check that $\mu_k(\widetilde{B})$ is also an integer matrix 
with skew-symmetric principal part.
We will assume from now on that $\widetilde{B}$ has skew-symmetric principal part. 
In this case, one can equivalently encode $\widetilde{B}$ by a quiver $\Gamma$ with vertex set
$\{1,\ldots,r\}$ and with $b_{ij}$ arrows from $j$ to $i$ if $b_{ij}>0$
and $-b_{ij}$ arrows from $i$ to $j$ if $b_{ij}<0$.

Now Fomin and Zelevinsky define a cluster algebra
$\A(\widetilde{B})$ as follows.
Let $\F = \Q(x_1,\ldots,x_r)$ be the field of rational
functions in $r$ commuting indeterminates 
$\xx = (x_1,\ldots,x_r)$. 
One calls $(\xx,\widetilde{B})$ the {\it initial seed} of~$\A(\widetilde{B})$.
For $1 \le k \le r-n$ define 
\begin{equation}\label{mutationformula}
x_k^* = 
\frac{\prod_{b_{ik}> 0} x_i^{b_{ik}} + \prod_{b_{ik}< 0} x_i^{-b_{ik}}}{x_k}.
\end{equation}
The pair 
$(\mu_k(\xx),\mu_k(\widetilde{B}))$, 
where
$\mu_k(\xx)$ is obtained from $\xx$ by replacing $x_k$ by 
$x_k^*$,
is the {\it mutation} of the seed $(\xx,\widetilde{B})$ in direction $k$. 
One can iterate this procedure and obtain new seeds by mutating 
$(\mu_k(\xx),\mu_k(\widetilde{B}))$ in any direction 
$l\in [1,r-n]$. 
Let $\SS$ denote the set of all seeds obtained from $(\xx,\widetilde{B})$ 
by any finite sequence of mutations.
Each seed of $\SS$ consists of an $r$-tuple of elements of $\F$
called a {\em cluster}, and of a matrix.
The elements of a cluster are its {\em  cluster variables}.
One does not mutate the last $n$ elements of a cluster; they are
called {\em frozen variables} and belong to every cluster.
We then define the {\em cluster algebra}
$\A(\widetilde{B})$ as
the subring of $\F$ generated by all the cluster variables of 
the seeds of $\SS$.
A {\em cluster monomial} is a monomial in the cluster variables 
of a {\em single} cluster. Two cluster variables are said to be {\it compatible} if they occur in the same cluster.

The first important result of the theory 
is that every cluster variable $z$
of $\A(\widetilde{B})$ is a Laurent polynomial in
$\xx$ with
coefficients in $\Z$.
It is conjectured that the coefficients are positive.

The second main result is the classification of {\em cluster algebras
of finite type}, \ie with finitely many different cluster variables.
Fomin and Zelevinsky proved that this happens if and only if there
exists a seed $(\zz, \widetilde{C})$ such that the quiver
attached to the principal part of $\widetilde{C}$
is a Dynkin quiver (that is, an arbitrary orientation of a Dynkin diagram of
type $A, D, E$).

In \cite{FZ4}, Fomin and Zelevinsky have shown that the cluster
variables of a cluster algebra $\A$ have a nice expression in terms of certain
polynomials called the {\it $F$-polynomials}. In type $A$ and $D$, explicit formulas for $F$-polynomials are known.

\subsection{}
The concept of a monoidal categorification of a cluster algebra was introduced in 
\cite[Definition 2.1]{HL}. We say that a simple object $S$ of a monoidal category is 
{\em real} if $S\otimes S$ is simple.

\begin{definition}\label{defmoncat}
Let $\A$ be a cluster algebra and let $\M$ be an abelian monoidal category.
We say that $\M$ is a monoidal categorification of $\A$
if there is an isomorphism between $\A$ and the Grothendieck ring of $\M$ 
such that the cluster monomials of $\A$ are the classes of all the 
real simple objects of $\M$.
\end{definition}

A non trivial simple object $S$ of $\M$ is {\em prime}
if there exists no non trivial factorization $S\cong S_1 \otimes S_2$.
By \cite[Section 8.2]{GLS}, the cluster variables of $\A$ 
are the classes of all the real prime simple objects of $\M$. 
So Definition \ref{defmoncat} coincides with the definition in \cite{HL}.

As an application, we get information on the cluster algebra, as shown by the following result.
\begin{proposition}\cite{HL}\label{positivity}
If a cluster algebra $\A$ has a monoidal categorification, then
\begin{itemize}
 \item[{\rm (i)}]
every cluster variable of $\A$ has a Laurent expansion 
with positive coefficients with respect to any cluster;
\item[{\rm (ii)}]
the cluster monomials of $\A$ are linearly independent.
\end{itemize}
\end{proposition}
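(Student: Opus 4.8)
The plan is to use two elementary structural features of the Grothendieck ring of an abelian monoidal category whose objects have finite length: the classes of the simple objects form a $\Z$-basis of $K_0(\M)$, and the class of an arbitrary object expands in this basis with non-negative integer coefficients, namely its composition multiplicities. Let $\varphi\colon\A\isom K_0(\M)$ be the isomorphism provided by the monoidal categorification, so that cluster monomials correspond to classes of real simple objects. With this in hand, part (ii) is immediate: distinct cluster monomials are sent by $\varphi$ to distinct classes of real simple objects, and since the classes of \emph{all} simple objects are linearly independent in $K_0(\M)$, so in particular are those of the real simple ones; pulling back through $\varphi$ gives the asserted linear independence in $\A$.

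For part (i), I would fix an arbitrary cluster with variables $y_1,\ldots,y_r$ and let $T_i$ be the real prime simple object with $\varphi(y_i)=[T_i]$. Given any cluster variable $z=\varphi^{-1}[S]$, the Laurent phenomenon lets me write $z=\sum_{\mathbf a}c_{\mathbf a}\,\yy^{\mathbf a}$, a finite sum over $\mathbf a\in\Z^r$, and the goal is to prove $c_{\mathbf a}\ge 0$. First I would clear denominators: the product $\prod_i y_i$ is itself a cluster monomial of this cluster, so $\yy^{N\mathbf 1}=\varphi^{-1}[R]$ for a real simple object $R$ and every $N\ge 0$, and for $N$ large enough $z\,\yy^{N\mathbf 1}$ becomes an honest polynomial $\sum_{\mathbf b\ge 0}c_{\mathbf b-N\mathbf 1}\,\yy^{\mathbf b}$ in the $y_i$.

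The crux is the following observation. Each monomial $\yy^{\mathbf b}$ with $\mathbf b\ge 0$ is again a cluster monomial of the chosen cluster, so $\varphi(\yy^{\mathbf b})=[S_{\mathbf b}]$ for a real simple object $S_{\mathbf b}$, with distinct exponents giving distinct simple classes, all of them basis vectors of $K_0(\M)$. Applying $\varphi$ to $z\,\yy^{N\mathbf 1}$ gives on one side $[S\otimes R]=\sum_{\mathbf b\ge 0}c_{\mathbf b-N\mathbf 1}\,[S_{\mathbf b}]$; on the other side $[S\otimes R]$ is the class of a genuine object of $\M$, so its expansion in the simple basis has non-negative coefficients. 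Comparing the two expressions and using uniqueness of coordinates with respect to the basis of simple classes, each $c_{\mathbf b-N\mathbf 1}$ is forced to equal the multiplicity of $S_{\mathbf b}$ as a composition factor of $S\otimes R$, hence $\ge 0$. Dividing back by $\yy^{N\mathbf 1}$ yields $c_{\mathbf a}\ge 0$ for all $\mathbf a$.

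The hard part — and the only place where the full strength of the hypothesis is used — is this crux step. It is not enough to know that $z\,\yy^{N\mathbf 1}$ has non-negative coordinates in the simple basis; one must know that the specific monomials $\yy^{\mathbf b}$ occurring in the Laurent expansion are themselves classes of simple objects, and therefore \emph{basis vectors}. This is precisely what lets the positivity of composition multiplicities pass undiluted to positivity of the Laurent coefficients, rather than being scrambled by a change of basis.
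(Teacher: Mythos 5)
Your proposal is correct and is essentially the argument of the paper, which gives no proof of its own but cites \cite{HL}, where the proof proceeds exactly as you do: part (ii) because cluster monomials are classes of simple objects and these form a $\Z$-basis of the Grothendieck ring, and part (i) by using the Laurent phenomenon to clear denominators against a cluster monomial of the given cluster, then identifying the resulting polynomial coefficients with composition multiplicities of a tensor product expanded in the basis of simple classes. Your ``crux'' observation --- that the monomials $\yy^{\mathbf b}$ are themselves cluster monomials, hence basis vectors, so positivity is not scrambled by a change of basis --- is precisely the point of the original proof.
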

Assertion (ii) can also be proved by using additive categorification, see the recent \cite{CKLP}.

Conversely, if $\M$ is a monoidal categorification
of a finite type cluster algebra, we can calculate the
factorization of any simple object of $\M$ as a tensor product of finitely many prime objects, 
as well as the composition factors of a tensor product of simple objects of $\M$. Moreover,
every simple object in $\M$ is real.

\section{Categories of finite-dimensional representations of $U_q(L \g)$}\label{sect2}

For recent surveys on the representation theory of quantum loop algebras,
we invite the reader to consult \cite{CH} or \cite{L}.

\subsection{}
 Let $\g$ be a simple Lie algebra of type $A, D, E$.
We denote by $I$ the set of vertices of its Dynkin diagram,
and we put $n = |I|$. 
The \emph{Cartan matrix} of $\g$ is the $I\times I$ matrix $C = (C_{ij})_{i,j\in I}$.
We denote by $\a_i\ (i\in I)$ and $\varpi_i\ (i\in I)$ the simple roots and fundamental
weights of $\g$, respectively.

Let $\xi\colon I \to \Z$ be a \emph{height function}, that is $|\xi_j - \xi_i| = 1$ if $C_{ij} = -1$.
It induces an orientation $Q$ of the Dynkin diagram of $\g$ such that we have an arrow
$i\to j$ if $C_{ij} = -1$ and $\xi_j = \xi_i-1$. Define
\[
\hI := \{(i,p)\in I\times\Z \mid p-\xi_i \in 2\Z\}.
\]

\subsection{}
Let $L\g$ be the loop algebra attached to $\g$, and let $U_q(L\g)$ be the
associated quantum enveloping algebra. 
We assume that the deformation parameter $q\in\C^*$ is not a root of unity.

The simple finite-dimensional irreducible $U_q(L\g)$-modules (of type 1)
are usually labeled by Drinfeld polynomials. Here we shall use an 
alternative labeling by dominant monomials (see  \cite{FR}).
Moreover, as in \cite{HL}, we shall restrict our attention to a certain tensor subcategory 
$\CC_\Z$ of the category of finite-dimensional $U_q(L\g)$-modules. 
The simple modules in $\CC_\Z$ are labeled by the dominant monomials
in $\Y = \Z\left[Y_{i,p}^{\pm 1}\mid (i,p) \in \hI\right]$, that is monomials
$m = \prod_{(i,p)\in \hI} Y_{i,p}^{u_{i,p}(m)}$
such that $u_{i,p}(m) \ge 0$ for every $(i,p)\in \hI$.

We shall denote by $L(m)$ the simple module labeled by the dominant monomial $m$. 

By \cite{FR}, every object $M$ in $\CC_\Z$ has a $q$-character $\chi_q(M)\in\Y$. 
These $q$-characters generate a commutative ring $\K$
isomorphic to the Grothen\-dieck ring of $\CC_\Z$.

By \cite{FR, FM}, we have $\chi_q(L(m))\in m\Z[A_{i,p+1}^{-1}]_{(i,p)\in\hI}$ where
for $(i,p)\in\hI$ we denote
$$A_{i,p+1} = Y_{i,p}Y_{i,p+2}\prod_{j\in I, C_{ij} = -1}Y_{j,p+1}^{-1}\in \Y.$$
In particular, an element in $\K$ is characterized by the multiplicity of its dominant monomials.
When $m$ is the only dominant monomial occurring in $\chi\in\Y$, $\chi$
is said to be \emph{minuscule}. We say that $M$ is minuscule
if $\chi_q(M)$ is minuscule. This implies that $M$ is simple.

\subsection{}

Define
\[
\hI_\xi := \{(i,\xi_i)\mid i\in I\}\cup \{(i,\xi_i + 2)\mid i\in I\} \subset \hI, 
\]
and let $\Y_\xi$ be the subring
of $\Y$ generated by the variables $Y_{i,p}\ ((i,p)\in \hI_\xi)$.

\begin{definition} $\CC_\xi$ is the full subcategory of $\CC_\Z$ whose
objects have all their composition factors of the form $L(m)$
where $m$ is a dominant monomial in $\Y_\xi$.
\end{definition}
When $Q$ is a sink-source orientation, we recover the subcategories $\CC_1$ introduced in \cite{HL}.
Since $\hI_\xi$ is a ``convex slice'' of $\hI$, we get as in \cite[Lemma 5.8]{HL2} :

\begin{lemma}\label{tensorCQ}
$\CC_\xi$ is closed under tensor products, hence is a monoidal subcategory
of $\CC_\Z$. 
\end{lemma}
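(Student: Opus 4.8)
The plan is to translate the statement into the Grothendieck ring $\K$ and to control the dominant monomials that can appear in a product of $q$-characters. Since $\CC_\xi$ is cut out by a condition on composition factors and $\CC_\Z$ is already a tensor category, it is enough to treat a tensor product $L(m_1)\otimes L(m_2)$ of two simple objects with $m_1,m_2$ dominant monomials in $\Y_\xi$: for arbitrary $M,N\in\CC_\xi$ one has $[M][N]=\sum_{a,b}[L(m_a)][L(m'_b)]$ in $\K$ with all $m_a,m'_b\in\Y_\xi$, and the class $[L(m_1)\otimes L(m_2)]=[L(m_1)][L(m_2)]$ is the sum, over the dominant monomials $M'$ occurring in $\chi_q(L(m_1))\,\chi_q(L(m_2))$, of the $[L(M')]$ with positive multiplicity. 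Hence the whole lemma reduces to the claim: \emph{every dominant monomial $M'$ of $\chi_q(L(m_1))\,\chi_q(L(m_2))$ lies in $\Y_\xi$.}

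To analyse these monomials I would use the property $\chi_q(L(m))\in m\,\Z[A_{i,p+1}^{-1}]$. Thus any monomial of the product has the form $M'=m_1m_2\prod_{(k,q)\in\hI}A_{k,q+1}^{-v_{k,q}}$ with all $v_{k,q}\ge0$, and comparing exponents of each variable $Y_{k,p}$ gives
$$u_{k,p}(M')=u_{k,p}(m_1m_2)-v_{k,p}-v_{k,p-2}+\sum_{j:\,C_{kj}=-1}v_{j,p-1}.$$
The point of the two-row slice is a convexity observation: unwinding the definition of $A_{k,q+1}$ one checks that \emph{all} variables of $A_{k,q+1}^{-1}$ lie in $\hI_\xi$ if and only if $q=\xi_k$, since the factors are $Y_{k,q}^{-1}$, $Y_{k,q+2}^{-1}$ and the $Y_{j,q+1}$ with $C_{kj}=-1$, while $\hI_\xi$ consists exactly of the two rows $p\in\{\xi_k,\xi_k+2\}$. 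Equivalently, $A_{k,\xi_k+1}^{-1}$ preserves $\Y_\xi$, whereas every other $A_{k,q+1}^{-1}$ drags a variable outside the slab; so the claim is that only the interior operators $A_{k,\xi_k+1}^{-1}$ can act while $M'$ stays dominant.

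The engine is an extremal-row argument. Recall first that in a single $\chi_q(L(m))$ the operators $A_{k,q+1}^{-1}$ that occur have $q$ bounded below by the lowest row present in $m$, so for $m\in\Y_\xi$ the $v_{k,q}$ are confined to rows $q\ge\xi_{\min}:=\min_j\xi_j$. Now let $q_0$ be the smallest value of $q$ with some $v_{k,q}>0$, attained at a vertex $k_0$. Taking $p=q_0$ in the displayed formula, the terms $v_{k_0,q_0-2}$ and $v_{j,q_0-1}$ vanish by minimality, so dominance gives
$$u_{k_0,q_0}(M')=u_{k_0,q_0}(m_1m_2)-v_{k_0,q_0}\ge0,$$
forcing $u_{k_0,q_0}(m_1m_2)>0$ and hence $q_0\in\{\xi_{k_0},\xi_{k_0}+2\}$; the symmetric computation at the top row shows the largest occurring $q$ sits in $\{\xi_{k_1}-2,\xi_{k_1}\}$. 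Feeding these boundary constraints back into the exponent formula and peeling off the outermost factors $A^{-1}$ one at a time, using at each step that an $A_{k,q+1}^{-1}$ with $q\ne\xi_k$ leaves an uncancellable $Y_{j,s}^{-1}$ with $(j,s)\notin\hI_\xi$, shows that in fact every surviving $v_{k,q}$ has $q=\xi_k$, so that $M'=m_1m_2\prod_k A_{k,\xi_k+1}^{-v_{k,\xi_k}}\in\Y_\xi$. This is precisely the convexity of the slice $\hI_\xi$, and the bookkeeping is the same as in \cite[Lemma 5.8]{HL2}.

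I expect the main obstacle to be exactly this boundary bookkeeping. The extremal-row computation pins down the bottom and top rows of the $A$-support immediately, but ruling out \emph{intermediate} operators $A_{k,q+1}^{-1}$ with $\xi_k<q$ below the top requires the height-function geometry $|\xi_i-\xi_j|=1$ to guarantee that the two rows $\xi_k,\xi_k+2$ form a genuinely convex slab: once such an operator is applied, the only way to restore dominance is to cascade further out of the slab, which is impossible for a finite product. Everything else — the reduction to a pair of simples, the passage through $\K$, and the fact that $\CC_\xi$ contains the unit object $L(1)$ — is routine, and this establishes that $\CC_\xi$ is closed under tensor products, hence a monoidal subcategory of $\CC_\Z$.
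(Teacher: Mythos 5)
Your overall strategy is the one the paper itself intends: the paper disposes of this lemma in one line by invoking the convexity of the slice $\hI_\xi$ ``as in \cite[Lemma 5.8]{HL2}'', and your reduction to a tensor product of two simples, the expansion $M'=m_1m_2\prod_{(k,q)}A_{k,q+1}^{-v_{k,q}}$, the exponent formula, and the two extremal-row computations are exactly the right bookkeeping for that argument. The reduction itself is sound: since all multiplicities in $[L(m_1)][L(m_2)]=\sum_{M'}c_{M'}[L(M')]$ are nonnegative, there is no cancellation, every $M'$ with $c_{M'}>0$ is a dominant monomial of $\chi_q(L(m_1))\chi_q(L(m_2))$, and it suffices to show all such $M'$ lie in $\Y_\xi$.

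However, the central step --- ruling out the intermediate operators $A_{k,q+1}^{-1}$ with $q\neq\xi_k$ --- has a genuine gap as written. Your justification, that such an operator ``leaves an uncancellable $Y_{j,s}^{-1}$ with $(j,s)\notin\hI_\xi$'', is false in isolation: an off-slab negative variable produced by one $A^{-1}$ can be cancelled by the \emph{positive} part of a neighbouring off-slab operator (for instance $Y_{k,\xi_k+4}^{-1}$ coming from $A_{k,\xi_k+3}^{-1}$ is cancelled by the factor $Y_{k,\xi_k+4}$ of $A_{j,\xi_k+4}^{-1}$ for $C_{jk}=-1$); this is precisely the cascade you acknowledge, and ``peeling off the outermost factors one at a time'' is not a well-founded induction. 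Moreover, because $\xi$ varies over $I$, extremality in the plain row index $q$ --- which is what your bottom/top computations use --- does not control the distance to the slab at other vertices: a factor $v_{k,q}>0$ with $q\le\xi_k-2$ at an interior vertex is perfectly compatible with your two boundary constraints. The repair is short and is the actual content of the cited argument: measure extremality by $d=q-\xi_k$ instead of by $q$. If some $v_{k,q}>0$ has $d\ge 2$, choose such a pair with $d$ maximal and, among those, with $\xi_k$ maximal; then in your displayed formula at $p=q+2$ one has $u_{k,q+2}(m_1m_2)=0$ (the site is off the slab), $v_{k,q+2}=0$ (distance $d+2$), and $v_{j,q+1}=0$ for every neighbour $j$ (distance $d+2$ if $\xi_j=\xi_k-1$; distance $d$ but with $\xi_j=\xi_k+1>\xi_k$ if $\xi_j=\xi_k+1$), whence $u_{k,q+2}(M')=-v_{k,q}<0$, contradicting dominance. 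The mirror choice ($d\le-2$ minimal, then $\xi_k$ minimal, evaluated at $p=q$) kills the lower side, and since $d=q-\xi_k$ is always even this forces $q=\xi_k$ for every surviving factor, so $M'=m_1m_2\prod_k A_{k,\xi_k+1}^{-v_{k,\xi_k}}\in\Y_\xi$ as you claimed. With this replacement your proof is complete and coincides with the paper's (cited) argument.
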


We denote by $\K_\xi$ the subring of $\K$ spanned by 
the $q$-characters $\chi_q(L(m))$ of the simple objects $L(m)$ in $\CC_\xi$.
Then $\K_\xi$ is isomorphic to the Grothendieck ring $\mathcal{R}_\xi$ of $\CC_\xi$.
Note that this ring is a polynomial ring over $\Z$ with generators the classes
of the $2n$ fundamental modules 
\[
L(Y_{i,\xi_i}),\quad L(Y_{i,\xi_i+2}),\qquad (1\le i\le n).
\]
The $q$-character of a simple object $L(m)$ of $\CC_\xi$ contains in general many
monomials $m'$ which do not belong to $\Y_\xi$. By discarding these monomials we
obtain a \emph{truncated $q$-charac\-ter} \cite{HL}. 
We shall denote by $\bchi_q(L(m))$ the truncated 
$q$-character of $L(m)$.
One checks that for a simple object $L(m)$ of $\CC_\xi$,
all the dominant monomials occurring in $\chi_q(L(m))$
belong to the truncated $q$-character $\bchi_q(L(m))$
(the proof is similar to that of \cite{HL} for the category $\CC_1$, as for the proof 
of Lemma~\ref{tensorCQ} above).
Therefore the truncation map $\chi_q(L(m))\mapsto \bchi_q(L(m))$
extends to an injective algebra homomorphism from $\K_\xi$ to $\Y_\xi$.

It is sometimes convenient to renormalize the (truncated) $q$-character
of $L(m)$ by dividing it by~$m$, so that its leading term becomes $1$.
The element of $\Y$ thus obtained is called a \emph{renormalized (truncated) $q$-character}. 

Define a partial ordering $\preceq$ on $\Y$ by $\chi\preceq \chi'$ if $\chi'-\chi$ is 
an $\N$-linear combination of monomials.
In particular, we have $\bchi_q(M)\preceq \chi_q(M)$ for $M$ in $\mathcal{C}_\xi$. 

\subsection{}\label{tools} Let $J\subset I$ and $\g_J\subset \g$ be the corresponding Lie subalgebra. Let $\hI_J = \hI\cap (J\times \Z)$. For $m$ a monomial, let 
$m_J = \prod_{(i,p)\in \hI_J} Y_{i,p}^{u_{i,p}(m)}$. If $m_J$
is dominant, one says that $m$ is $J$-dominant. In this case, let $L_J(m)$
be the sum (with multiplicities) of the monomials $m'$ occurring in $mm_J^{-1}\chi_q(L(m_J))$
such that $m(m')^{-1}$ is a product of $A_{i,p+1}^{-1}$, $(i,p)\in \hI_J$.
The image of $L_J(m)$ in $\Z[Y_{i,p}]_{(i,p)\in \hI_J}$, obtained by sending the $Y_{i,p}$ to $1$ if $(i,p)\notin \hI_J$, is the $q$-character of the simple $U_q(L\g_J)$ labeled by $m_J$ \cite[Lemma 5.9]{h6}. In particular 
we have the following:
\begin{lemma}\label{fact} 
Let $m$ and $m'$ be two dominant monomials such that
$L(m)\otimes L(m')$ is simple. 
Then $L_J(m) L_J(m') = L_J(mm')$.
\end{lemma}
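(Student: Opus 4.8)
The plan is to realize $L_J$ as a \emph{multiplicative truncation} of ordinary $q$-characters and then to feed in the hypothesis through the factorization of $q$-characters that it forces. First I would record that consequence. Since $L(m)\otimes L(m')$ is simple, its $q$-character equals the product $\chi_q(L(m))\,\chi_q(L(m'))$; this product has $mm'$ as its leading, hence unique dominant, monomial, so by the characterization of classes in $\K$ by their dominant monomials the simple tensor product is $L(mm')$ and
\[
\chi_q(L(mm')) \;=\; \chi_q(L(m))\,\chi_q(L(m')) \qquad \mbox{in } \K .
\]

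Next I would introduce the truncation. By \cite{FR, FM} every $\chi_q(L(\ell))$ lies in $\ell\cdot\Z[A_{i,p+1}^{-1}]_{(i,p)\in\hI}$, so its monomials all have the form $\ell\prod A^{-c}$ with $c=(c_{i,p})$ a nonnegative exponent vector (here $A^{-c}:=\prod A_{i,p+1}^{-c_{i,p}}$). Let $T_J$ be the operator keeping exactly those monomials with $c_{i,p}=0$ for all $(i,p)\notin\hI_J$, i.e. those in $\ell\cdot\Z[A_{i,p+1}^{-1}]_{(i,p)\in\hI_J}$. With this notation the definition recalled above says precisely that $L_J(m)=T_J(\chi_q(L(m)))$: the selected monomials are those reached from $m$ by applying only the $A_{i,p+1}^{-1}$ with $(i,p)\in\hI_J$, and by \cite[Lemma 5.9]{h6} (the identification of the $Y_{\hI_J}$-image of $L_J(m)$ with the $U_q(L\g_J)$-character of highest monomial $m_J$) these agree with the corresponding part of $mm_J^{-1}\chi_q(L(m_J))$. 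Securing this identification is the one substantial input, imported from the restriction theory of \cite{h6}.

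The heart of the argument is then the multiplicativity of $T_J$. If $\chi=\sum_c a_c\,\ell\prod A^{-c}$ and $\chi'=\sum_{c'} b_{c'}\,\ell'\prod A^{-c'}$ are two such classes, the coefficient of $\ell\ell'\prod A^{-d}$ in $\chi\chi'$ is $\sum_{c+c'=d}a_c b_{c'}$. When $d$ is supported on $\hI_J$, every decomposition $c+c'=d$ with $c,c'\ge 0$ forces both $c$ and $c'$ to be supported on $\hI_J$; conversely these are exactly the pairs contributing to $T_J(\chi)\,T_J(\chi')$. The contributing decompositions therefore match term by term and
\[
T_J(\chi\,\chi') \;=\; T_J(\chi)\,T_J(\chi').
\]
This uses only the positivity $c,c'\ge 0$ of the $A$-exponents, and is what makes the computation ``much simpler'' here. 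Combining the three ingredients closes the proof:
\[
L_J(m)\,L_J(m') = T_J(\chi_q(L(m)))\,T_J(\chi_q(L(m'))) = T_J\big(\chi_q(L(m))\,\chi_q(L(m'))\big) = T_J(\chi_q(L(mm'))) = L_J(mm').
\]

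The main obstacle is thus not the combinatorics of $T_J$, which is elementary, but the identification $L_J(m)=T_J(\chi_q(L(m)))$: one must know that the part of $\chi_q(L(m))$ reachable from $m$ using only the $A_{i,p+1}^{-1}$ with $(i,p)\in\hI_J$ depends on $m$ only through $m_J$ and coincides with the analogous part of $\chi_q(L(m_J))$. An alternative route avoiding it would instead deduce from the hypothesis that $L(m_J)\otimes L(m'_J)$ is again simple, giving $\chi_q(L(m_J))\chi_q(L(m'_J))=\chi_q(L(m_Jm'_J))$ and, after applying $T_J$ to the shifted characters, the same conclusion; but proving this restricted simplicity directly seems less clean than invoking \cite{h6}.
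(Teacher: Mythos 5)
Your proof is correct and is essentially the paper's own argument made explicit: the paper states Lemma~\ref{fact} as an immediate consequence (``In particular'') of the description of $L_J$ in \S\ref{tools}, and your three ingredients --- multiplicativity of $q$-characters for a simple tensor product, multiplicativity of your truncation $T_J$ forced by the nonnegativity of the $A_{i,p+1}^{-1}$-exponents guaranteed by \cite{FR, FM}, and the identification $L_J(m)=T_J(\chi_q(L(m)))$, which you rightly single out as the one substantive input and which is exactly what \cite[Lemma 5.9]{h6}, together with the uniqueness of the decomposition (\ref{decompj}), supplies --- constitute precisely that derivation. One local correction: the product $\chi_q(L(m))\,\chi_q(L(m'))$ need \emph{not} have $mm'$ as its unique dominant monomial, since even simple $q$-characters can contain dominant monomials below the highest one (in type $A_1$, $\chi_q(L(Y_{1,0}Y_{1,2}^2Y_{1,4}))$ contains the dominant monomial $Y_{1,0}Y_{1,2}$); what you need, and what suffices, is only that $mm'$ is the \emph{highest} monomial of the product, so that the simple module $L(m)\otimes L(m')$ has highest weight monomial $mm'$ and is therefore isomorphic to $L(mm')$.
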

For $m$ a dominant monomial one has a decomposition \cite[Proposition 3.1]{H1}
\begin{equation}\label{decompj}L(m) = \sum_{m'}  \lambda_J(m') L_J(m')\end{equation}
where the sum runs over $J$-dominant monomials $m'$. The $\lambda_J(m')\in\N$ are unique. This corresponds to the decomposition of $L(m)$ in the Grothendieck ring of $U_q(L\g_J)$-modules. 
This decomposition gives an inductive process to construct monomials occurring
in $\chi_q(L(m))$. Let us start with $m_0 = m$. Then the monomials $m_1$ of $L_J(m_0)$
occur in $\chi_q(L(m))$. If $m_1$ is $J_1$-dominant ($J_1\subset I$) and
if $L_{J_1}(m_1)$ occurs in the decomposition (\ref{decompj}), then the monomials
$m_2$ of $L_{J_1}(m_1)$ occur in $\chi_q(L(m))$, and we continue. See \cite[Remark 3.16]{h6} for details.

\subsection{}\label{estacat} In this note, we follow the proof of \cite{HL} to establish that 
for certain choices of $\xi$ the category 
$\mathcal{C}_\xi$ is a monoidal categorification of a cluster algebra $\mathcal{A}$. 
Let us recall the main steps (see \cite{HL} for details) :

(1) We define a family $\mathcal{P}$ of prime simple modules in $\mathcal{C}_\xi$ and
we label the cluster variables of an acyclic initial seed $\Sigma$ of $\mathcal{A}$
with a  subset of $\mathcal{P}$.

(2) We prove that the renormalized truncated $q$-characters of the representations of $\mathcal{P}$ 
coincide with the $F$-polynomials with respect to $\Sigma$ of 
all the cluster variables of $\mathcal{A}$.

(3) We prove an irreducibility criterion for tensor products of two representations in $\mathcal{P}$.

(4) By using the following general result, we factorize every simple module in $\mathcal{C}_\xi$ as a tensor product of representations in $\mathcal{P}$.
\begin{theorem}\label{irretens}\cite{H3} Let $S_1,\ldots, S_N$ be simple objects in $\mathcal{C}$. Then $S_1\otimes S_2\otimes \cdots \otimes S_N$
is simple if and only $S_i\otimes S_j$ is simple for any $1\leq i < j\leq N$.
\end{theorem}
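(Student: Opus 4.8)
The plan is to reduce this $N$-fold irreducibility criterion to a two-sided cyclicity condition, and then to prove that this global condition is governed by its pairwise instances. Each $S_i = L(m_i)$ is an $\ell$-highest weight module; let $v_i$ denote an $\ell$-highest weight vector, of $\ell$-weight $m_i$, and set $W := S_1 \otimes \cdots \otimes S_N$ and $\mm := m_1 \cdots m_N$. Then $v := v_1 \otimes \cdots \otimes v_N$ is an $\ell$-highest weight vector of $W$ of weight $\mm$. Since the $q$-character is multiplicative, $\chi_q(W) = \prod_i \chi_q(S_i)$, and each $\chi_q(S_i)$ lies in $m_i\,\Z[A_{i,p+1}^{-1}]$; hence $\mm$ occurs in $\chi_q(W)$ with coefficient $1$, while every other monomial of $\chi_q(W)$ is $\mm$ times a nontrivial product of factors $A_{i,p+1}^{-1}$. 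It follows that $L(\mm)$ is a composition factor of $W$ of multiplicity one, so that $W$ is simple if and only if $W = L(\mm)$.

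Call $W$ cyclic if $W = U_q(L\g)\cdot v$, in which case its head is $L(\mm)$, and cocyclic if its socle is $L(\mm)$. Because $L(\mm)$ has multiplicity one in $W$, a module with simple head $L(\mm)$ and simple socle $L(\mm)$ must have length one; thus $W$ is simple if and only if it is both cyclic and cocyclic. The engine of the whole proof is then the following cyclicity criterion, which I would isolate as a separate proposition: for simple modules taken in a fixed order, $W$ is cyclic if and only if $S_i \otimes S_j$ is cyclic for every $i < j$. The mechanism is the theory of normalized $R$-matrices (Kashiwara, Akasaka--Kashiwara): for each ordered pair there is a normalized intertwiner $S_i \otimes S_j \to S_j \otimes S_i$ fixing $v_i \otimes v_j$, and $S_i \otimes S_j$ is cyclic precisely when the relevant normalized $R$-matrix is regular, i.e.\ pole-free at the given spectral parameters. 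One implication is immediate, since a partial tensor of highest weight vectors already generates a cyclic subquotient. For the converse I would argue by induction on $N$: assuming pairwise cyclicity, the subproduct $S_2 \otimes \cdots \otimes S_N$ is cyclic by induction, and one transports $v_1$ across it using the braid relations satisfied by the $R$-matrices together with Chari's ordering theorem, which guarantees cyclicity for a suitable order and lets pairwise regularity propagate to the full product.

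Granting this proposition, the theorem follows symmetrically by passing to duals, using that $\mathcal{C}$ is rigid, that the dual of a simple module is simple, and that $(S_i \otimes S_j)^{*} \cong S_j^{*} \otimes S_i^{*}$. For the backward implication, suppose every $S_i \otimes S_j$ is simple. Each is then cyclic, so the proposition makes $W$ cyclic. Moreover each $S_j^{*} \otimes S_i^{*} \cong (S_i \otimes S_j)^{*}$ is simple, hence cyclic, so the proposition applied to $W^{*} = S_N^{*} \otimes \cdots \otimes S_1^{*}$ makes $W^{*}$ cyclic, i.e.\ $W$ cocyclic; therefore $W$ is simple. For the forward implication, if $W$ is simple then it is cyclic and $W^{*}$ is simple hence cyclic; the proposition applied to $W$ forces each $S_i \otimes S_j$ to be cyclic, while the proposition applied to $W^{*}$ forces each $(S_i \otimes S_j)^{*}$ to be cyclic, i.e.\ each $S_i \otimes S_j$ cocyclic, whence simple by the multiplicity-one argument.

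The main obstacle is the converse half of the cyclicity proposition: deducing cyclicity of the full product from the pairwise data in the case where the intermediate products $S_2 \otimes \cdots \otimes S_N$ need not be simple. Controlling the submodule generated by $v$ in that situation is exactly where the analysis of the poles of the normalized $R$-matrices, or equivalently a careful bookkeeping of the dominant monomials appearing in $\prod_i \chi_q(S_i)$ via the multiplicity-one property of $\mm$, does the real work.
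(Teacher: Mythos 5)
First, a point of calibration: this note contains no proof of Theorem~\ref{irretens} at all --- it is quoted from \cite{H3} (Hernandez, \emph{Simple tensor products}, Invent.\ Math.\ \textbf{181} (2010)), so your attempt has to be measured against the proof given there. Your opening reduction is correct and does match the framework of that paper: since $\chi_q(L(m))\in m\,\Z[A_{i,p+1}^{-1}]$, the top monomial $\mm=m_1\cdots m_N$ occurs in $\chi_q(W)$ with multiplicity one and strictly dominates every other monomial, so $L(\mm)$ has multiplicity one as a composition factor, and $W$ is simple if and only if it is both cyclic (simple head $L(\mm)$) and cocyclic (simple socle $L(\mm)$); the duality bookkeeping $(S_i\otimes S_j)^*\cong S_j^*\otimes S_i^*$ is also fine.

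The genuine gap is that your central ``cyclicity proposition'' --- pairwise cyclicity of the $S_i\otimes S_j$ implies cyclicity of the full product --- is stated, not proved, and it carries essentially the entire weight of the theorem; your closing paragraph concedes as much. The sketch you offer for it does not go through as written: Chari's ordering theorem concerns tensor products of \emph{fundamental} representations with an ordering condition on spectral parameters, not arbitrary simple objects of $\mathcal{C}$, and the regularity criterion for normalized $R$-matrices that you invoke (existence, pole-freeness equivalent to cyclicity) is itself a nontrivial input available at that time only for good modules in the sense of Akasaka--Kashiwara; the machinery making this strategy rigorous for arbitrary simples was developed later by Kang--Kashiwara--Kim(--Oh), whose $R$-matrix proof of exactly this statement your plan essentially anticipates. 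The proof in \cite{H3}, by contrast, avoids pole analysis entirely: it establishes the cyclicity statement by a $q$-character argument, an induction controlling the dominant monomials of submodules of the tensor product --- precisely the ``careful bookkeeping'' you defer to in your last sentence. So your proposal reformulates the theorem in a correct and standard frame but leaves its hard technical core unproved. A secondary gap: in the ``only if'' direction you need cyclicity of $S_i\otimes S_j$ for \emph{all} pairs, but the implication you call immediate (cyclicity of $W$ passes down to pairs) is immediate only for \emph{consecutive} blocks, since $S_1\otimes\cdots\otimes U\otimes\cdots\otimes S_N$ is a submodule of $W$ only when $U$ sits inside a block of adjacent factors; for non-adjacent pairs one must first get simplicity of consecutive subproducts and then commute simple factors using $A\otimes B$ simple $\Rightarrow A\otimes B\cong B\otimes A$, a step your write-up skips.
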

In the next sections, we follow these steps for a good choice of $\xi$ in types $A$ and $D$. 
We conjecture that for arbitrary choices of $\xi$ and for every type $A, D, E$, $\mathcal{C}_\xi$ is the monoidal
catagorification of a cluster algebra of the same type. For type $A$, this can be proved in the same way as explained
in Remark \ref{othero} (b). For other types, this can be probably established by using the methods in \cite{Ncl}.

\section{Type $A$} 

\subsection{}
Let $\mathcal{A}$ be a cluster algebra of type $A_n$ in the Fomin-Zelevinsky classification.
As is well-known, the combinatorics of $\A$ is conveniently recorded in a regular polygon ${\mathbf P}$
with $n+3$ vertices labeled from $0$ to $n+2$, see \cite[\S 12.2]{FZ2}. 
Here, each cluster variable $x_{ab}\ (0\le a < b \le n+2)$ is labeled by the segment 
joining vertex $a$ to vertex $b$. The cluster variables $x_{ab}$ for which the segment 
$[a,b]$ is a side of the polygon are frozen. Moreover we specialize 
\[
 x_{01} = x_{n+1,\,n+2} = x_{0,\,n+2} =1.
\]
The exchange relations (Ptolemy relations) are of the form
\begin{equation}\label{Ptolemy}
x_{ac}x_{bd} = x_{ab}x_{cd} + x_{ad}x_{bc},\qquad (a<b<c<d). 
\end{equation}
The clusters of $\mathcal{A}$ correspond to the triangulations of ${\mathbf P}$.
The variables $x_{0i}\ (2\le i \le n+1)$ together with the $n$ frozen variables
$x_{i,\,i+1}\ (1\le i \le n)$ form a cluster, whose associated quiver is
\[
\matrix{x_{02}& \rightarrow & x_{03} & \rightarrow & x_{04} & \rightarrow &
 \cdots &  \rightarrow & x_{0,\,n+1}  \cr
         \downarrow &\nwarrow & \downarrow &\nwarrow& \downarrow &\nwarrow& &\nwarrow& \downarrow  \cr
 x_{12} &  & x_{23} & & x_{34} & & \cdots
   &   & x_{n,\,n+1}     
}
\]
Note that the principal part of this quiver (\ie the subquiver with vertices the 
non-frozen variables) is a quiver of type $A_n$ with linear orientation.
We denote by $\Sigma$ this particular seed of $\A$.

\subsection{}\label{sect4.2}
Let $\g$ be of type $A_n$. We will write $Y_{0,p} = Y_{n+1,p} = 1$ for $p\in\Z$.
We choose the height function 
\[
\xi(i) := i, \qquad (i\in I),
\]
corresponding to a quiver $Q$ of type $A_n$ with linear orientation.
We define the following family 
of irreducible representations in $\mathcal{C}_\xi$:
\[\mathcal{P} := \{L(i,j) := L(Y_{i,i}Y_{j,j + 2}) \mid 0\leq i\leq j\leq n+1\}.\]
The simple modules $L(i,j)$ are evaluation representations whose $q$-characters are known (see referen\-ces in \cite{CH}). 
In particular they are prime. We have $\bchi_q(L(0,j)) = Y_{j,j+2}$ and if $i\neq 0$ we have
$$\bchi_q(L(i,j)) = Y_{i,i}Y_{j,j+2}(1 + A_{i,i+1}^{-1} + (A_{i,i+1}A_{i+1,i+2})^{-1} 
+ \cdots + (A_{i,i+1}A_{i+1,i+2}\cdots A_{j-1,j})^{-1}).$$
Dividing both sides by $Y_{i,i}Y_{j,j+2}$ and setting $t_i := A_{i,i+1}^{-1}$, we see that
this formula for the renormalized truncated $q$-characters coincides with the formula for $F$-polynomials 
computed in \cite[Example 1.14]{yz}. 
It is easy to deduce from this that we have the following relations in $\mathcal{R}_\xi$ 
(also obtained in \cite{MY}) :
\begin{equation}\label{ident}[L(i,k)][L(j,l)]
= [L(i,l)][L(j,k)] +
[L(i,j-1)][L(k+1,l)]\quad\mbox{if}\quad0\leq i < j \leq k < l \leq n+1.\end{equation}
Therefore, comparing with (\ref{Ptolemy}), we see that the assignment 
\[
x_{ab} \mapsto [L(a,b-1)],\qquad (0\le a < b \le n+2)
\]
extends to an isomorphism from the cluster algebra $\A$ to the Grothendieck ring $\RR_\xi$.
This isomorphism maps the seed $\Sigma$ to
\[
\matrix{L(0,1)& \rightarrow & L(0,2) & \rightarrow &
 \cdots &  \rightarrow & L(0,n)  \cr
         \downarrow &\nwarrow & \downarrow &\nwarrow&  &\nwarrow& \downarrow  \cr
 L(1,1) &  & L(2,2) & & \cdots
   &   & L(n,n)     
}
\]
where the $L(i,i)\ (1\leq i\leq n)$ correspond to frozen variables.

We say that $(i,k)$ and $(j,l)$ are \emph{crossing} if and only if $i < j\leq k < l$ or $j < i\leq l < k$.
Otherwise, we say that $(i,k)$ and $(j,l)$ are \emph{noncrossing}.
The next proposition is similar to the classical irreducibility criterion
for prime representations of $U_q(L\mathfrak{sl}_2)$,
except that here, spectral parameters are replaced by nodes of the Dynkin diagram.

\begin{proposition}\label{tensa} The module $L(i,j)\otimes L(k,l)$ is simple if and only if 
$(i,j)$ and $(k,l)$ are noncrossing.
\end{proposition}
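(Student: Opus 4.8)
The plan is to prove both directions by analyzing the dominant monomials occurring in $\chi_q(L(i,j)\otimes L(k,l)) = \chi_q(L(i,j))\chi_q(L(k,l))$, using the principle stated in Section \ref{sect2} that a module is simple precisely when its $q$-character has a single dominant monomial. Recall that $L(i,j)\otimes L(k,l)$ is simple if and only if $L(Y_{i,i}Y_{j,j+2}Y_{k,k}Y_{l,l+2})$ exhausts the whole product, which happens exactly when the product $\chi_q(L(i,j))\chi_q(L(k,l))$ contains only the one dominant monomial $m=Y_{i,i}Y_{j,j+2}Y_{k,k}Y_{l,l+2}$.

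First I would establish the \emph{``if''} direction (noncrossing implies simple). When $(i,j)$ and $(k,l)$ are noncrossing the two ``intervals'' either nest or are disjoint, and in each case one checks directly that the only way to multiply a monomial from $\bchi_q(L(i,j))$ by a monomial from $\bchi_q(L(k,l))$ and land on a dominant monomial is to take the two leading terms $Y_{i,i}Y_{j,j+2}$ and $Y_{k,k}Y_{l,l+2}$. Concretely, using the explicit formula for $\bchi_q(L(i,j))$ given just above the proposition, every non-leading monomial carries a factor $(A_{i,i+1}\cdots A_{s,s+1})^{-1}$; I would verify that such negative factors from one interval cannot be cancelled by the positive $Y$-factors available from the noncrossing partner, so no new dominant monomial can arise. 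Because here the truncation map is injective on $\K_\xi$ and all dominant monomials of $\chi_q$ already appear in $\bchi_q$, it suffices to carry out this bookkeeping at the level of truncated characters, which keeps the computation short.

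For the \emph{``only if''} direction (crossing implies reducible) I would exhibit a second dominant monomial explicitly. Suppose $i<k\le j<l$. Using the product of the two truncated $q$-characters and the formula above, I would pick from $\bchi_q(L(i,j))$ the monomial obtained by applying $(A_{i,i+1}\cdots A_{k-1,k})^{-1}$ and from $\bchi_q(L(k,l))$ the leading term (or a suitable partner), and check that the resulting product is the dominant monomial $Y_{i,i}Y_{k-1,\,k+1}\,Y_{\,\cdot}\cdots$ corresponding to the second term $[L(i,k-1)][L(j+1,l)]$ appearing in the exchange relation (\ref{ident}). The existence of this extra dominant monomial forces $\chi_q(L(i,j)\otimes L(k,l))$ to contain at least two dominant monomials, so the tensor product is not simple. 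Alternatively, relation (\ref{ident}) itself already shows $[L(i,j)][L(k,l)]$ is a sum of two classes of simple modules in $\RR_\xi$, hence the tensor product cannot be simple in the crossing case; I would use this as a clean shortcut once the labels are matched.

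The main obstacle is the ``if'' direction: one must argue that \emph{no} product of a non-leading monomial with any partner monomial can accidentally become dominant. I expect this to require the inductive restriction technique of Section \ref{tools} — using the decomposition (\ref{decompj}) and Lemma \ref{fact} with $J$ chosen to separate the Dynkin nodes supporting the two intervals — rather than a purely formal manipulation, since one needs to control cancellations across the full (non-truncated) $q$-character. Restricting to the $U_q(L\g_J)$ with $J$ the connected subdiagram spanned by the relevant nodes reduces the nesting and disjoint cases to small rank computations where Lemma \ref{fact} guarantees the factorization $L_J(mm')=L_J(m)L_J(m')$, closing the argument.
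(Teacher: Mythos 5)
Your ``only if'' direction is fine and agrees with the paper: relation (\ref{ident}) exhibits $[L(i,j)][L(k,l)]$ as a sum of two nonzero classes in $\RR_\xi$, so a crossing pair cannot give a simple tensor product. The gap is in the ``if'' direction, and it starts with your opening premise: it is \emph{not} true that a module is simple precisely when its $q$-character has a single dominant monomial. The paper only asserts the implication in one direction (minuscule implies simple); a simple module may well have several dominant monomials in its $q$-character. Your plan --- show that in every noncrossing configuration the only dominant monomial in $\bchi_q(L(i,j))\,\bchi_q(L(k,l))$ is the product $M$ of the two highest weight monomials --- fails concretely in the nested case $1\le k\le i\le j<l$ (which is noncrossing). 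There the product of truncated characters contains \emph{exactly two} dominant monomials, $M$ and $M'=M(A_{k,k+1}A_{k+1,k+2}\cdots A_{j,j+1})^{-1}$, so the bookkeeping you propose cannot succeed: the extra dominant monomial genuinely arises, and no choice of $J$ in the restriction technique will make it go away.

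What is actually needed in the nested case is the opposite move: one must show that $M'$ already occurs in $\bchi_q(L(M))$, i.e., that the simple module with highest weight monomial $M$ absorbs the second dominant monomial, forcing $\bchi_q(L(M))$ to equal the full product and hence the tensor product to be simple. The paper does this by the inductive process of \S\ref{tools}: first $M''=M(A_{k,k+1}\cdots A_{i-1,i})^{-1}$ occurs in $\bchi_q(L(M))$, so $L_J(M'')$ with $J=\{i,\ldots,n\}$ appears in the decomposition (\ref{decompj}); then the key nontrivial input is that $L(Y_{i,i}Y_{j,j+2})\otimes L(Y_{i,i})$ is simple (proved via a minuscule tensor product and \cite[Corollary 4.11]{miniaff}), which places the monomial $Y_{i,i}^2Y_{j,j+2}(A_{i,i+1}\cdots A_{j,j+1})^{-1}$ inside $\bchi_q(L(Y_{i,i}^2Y_{j,j+2}))$ and hence $M'$ inside $L_J(M'')$. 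Your closing paragraph does invoke \S\ref{tools} and Lemma \ref{fact}, but aimed at the wrong target (ruling out cancellations rather than locating $M'$ in $\bchi_q(L(M))$), and it misses the essential external ingredient, the simplicity of $L(Y_{i,i}Y_{j,j+2})\otimes L(Y_{i,i})$, without which the nested case does not close.
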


\begin{proof} The ``only if'' part follows from (\ref{ident}). We prove the ``if'' part. Let $M = Y_{i,i}Y_{j,j + 2}Y_{k,k}Y_{l,l + 2}$. We have
$\bchi_q(L(M))\preceq \chi = \bchi_q(L(i,j)\otimes L(k,l))$. 
We prove the other inequality. By symmetry, we are reduced to the following two cases:

\smallskip\noindent
(a)\ \ if $j < k$ or ($k = 0$ and $i, j\leq l$) or ($1\leq k\leq i, j = l$), then $\chi$ 
contains a unique dominant monomial, namely $M$, 
so $L(i,j)\otimes L(k,l)$ is simple.

\smallskip\noindent
(b)\ \ if $1\leq k\leq i\leq j < l$, then $\chi$ contains exactly two dominant monomials, namely $M$ and 
$$M'= M(A_{k,k+1}A_{k+1,k+2}\cdots A_{j,j+1})^{-1}.$$ 
So it suffices to prove that $M'$ occurs in $\bchi(L(M))$. First, by \S\ref{tools}, the monomial
$$M'' = M(A_{k,k+1}A_{k+1,k+2}\cdots A_{i-1,i})^{-1}$$ occurs in $\bchi(L(M))$. Hence
$L_J(M'')$ occurs in the decomposition (\ref{decompj}) for $J=\{i, \ldots, n\}$.
But $L(Y_{i,-i} Y_{j,-j-2})\otimes L(Y_{i,-i})$ is minuscule and simple. Hence, by \cite[Corollary 4.11]{miniaff}, 
the tensor product $L(Y_{i,i} Y_{j,j+2})\otimes L(Y_{i,i})$ is simple, isomorphic to $L(Y_{i,i}^2 Y_{j,j+2})$. So $Y_{i,i}^2 Y_{j,j+2}(A_{i,i+1}\cdots A_{j,j+1})^{-1}$ occurs in $\bchi(L(Y_{i,i}^2 Y_{j,j+2}))$ and $M'$ occurs in $L_J(M'')$.
\cqfd
\end{proof}
Therefore, as explained in \S\ref{estacat}, we get the following:
\begin{theorem}\label{TH1} 
$\mathcal{C}_\xi$ is a monoidal categorification of the cluster algebra $\A$ of type $A_n$.
\end{theorem}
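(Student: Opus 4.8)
The plan is to assemble the ingredients laid out in \S\ref{estacat} into a verification of Definition~\ref{defmoncat}. First I would record that the assignment $x_{ab}\mapsto[L(a,b-1)]$ has already been shown (via the comparison of the Ptolemy relations (\ref{Ptolemy}) with the Grothendieck-ring identities (\ref{ident})) to extend to a ring isomorphism $\A\isom\RR_\xi$ carrying the initial seed $\Sigma$ to the displayed seed of fundamental and frozen modules. Thus steps~(1) and~(2) of \S\ref{estacat} are in place, and what remains is to certify that under this isomorphism the cluster monomials of $\A$ correspond exactly to the classes of the real simple objects of $\CC_\xi$.

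The key step is to translate the combinatorial description of cluster monomials in type $A_n$ into the representation theory. Cluster monomials are monomials in the variables of a single cluster, and the clusters correspond to triangulations of the polygon $\PB$; thus a cluster monomial is a product $\prod x_{a_k b_k}$ of variables whose arcs $[a_k,b_k]$ are pairwise noncrossing (together with arbitrary multiplicities). Under the isomorphism this is $\prod[L(a_k,b_k-1)]$, so I would show that the corresponding tensor product $\bigotimes_k L(a_k,b_k-1)$ is simple. Using Proposition~\ref{tensa}, each pairwise tensor product $L(a_k,b_k-1)\otimes L(a_l,b_l-1)$ is simple precisely because the arcs are noncrossing (one must check that the combinatorial ``crossing'' of Proposition~\ref{tensa} matches the geometric crossing of polygon arcs, including the frozen/specialized boundary cases $x_{01}=x_{n+1,n+2}=x_{0,n+2}=1$). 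Then Theorem~\ref{irretens} upgrades pairwise simplicity of the factors to simplicity of the whole tensor product, so every cluster monomial is the class of a simple object; since the $L(i,j)$ are prime and the product is simple, this simple object $S$ satisfies $S\otimes S$ simple, hence is real.

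For the converse inclusion I would argue that every real simple object of $\CC_\xi$ arises this way. Because $\RR_\xi$ is a polynomial ring in the $2n$ fundamental classes, and the isomorphism identifies its elements with the $q$-characters, each simple $L(m)$ of $\CC_\xi$ has a class that, being multiplicity-free on dominant monomials, must be a cluster monomial whenever $L(m)$ is real: one expands $m=\prod Y_{i,\xi_i}^{u_i}Y_{i,\xi_i+2}^{v_i}$, reads off the associated collection of arcs, and shows that reality of $L(m)$ forces this collection to be pairwise noncrossing — if two arcs crossed, the corresponding pair of fundamental factors would have a non-simple tensor square by Proposition~\ref{tensa}, contradicting reality via Theorem~\ref{irretens}. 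This gives the bijection between cluster monomials and real simple objects, completing the identification demanded by Definition~\ref{defmoncat}.

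The main obstacle I anticipate is the converse direction, namely proving that \emph{every} real simple object factors as a tensor product of objects in $\mathcal P$ with pairwise noncrossing parameters, and that no genuinely new real simple modules appear outside the cluster monomials. The forward direction is a clean application of Theorem~\ref{irretens} to Proposition~\ref{tensa}, but the converse requires controlling the full list of prime simple objects of $\CC_\xi$ and verifying that $\mathcal P$ exhausts them; here I would lean on the factorization statement in step~(4) of \S\ref{estacat} together with the observation (from the end of \S\ref{sect3}) that once $\CC_\xi$ is a monoidal categorification every simple object is automatically real, which makes the bookkeeping of real versus non-real objects manageable. Matching the boundary specializations of the polygon with the trivialized modules $L(Y_{0,p})=L(Y_{n+1,p})=1$ is a routine but necessary check that I would carry out carefully to avoid off-by-one errors in the arc labeling.
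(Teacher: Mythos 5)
Your forward direction coincides with the paper's: Proposition~\ref{tensa} together with Theorem~\ref{irretens} shows that a product of modules $L(i,j)$ with pairwise noncrossing parameters is simple, so every cluster monomial is the class of a real simple object. The genuine gap is in your converse. First, ``reading off the associated collection of arcs'' from a dominant monomial $m=\prod_i Y_{i,i}^{u_i}Y_{i,i+2}^{v_i}$ is not well defined: factoring $m$ into highest monomials $Y_{i,i}Y_{j,j+2}$ of modules of $\P$ is a matching problem with many solutions, some crossing and some not. For instance $m=Y_{1,1}Y_{2,2}Y_{2,4}Y_{3,5}$ factors as the crossing pair $(1,2),(2,3)$ but also as the noncrossing pair $(1,3),(2,2)$. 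Second, your claimed contradiction does not materialize: if a chosen pairing has a crossing, Proposition~\ref{tensa} only says that the tensor product of \emph{that pair} of modules of $\P$ is non-simple; but $L(m)$ is not assumed to be that tensor product, so Theorem~\ref{irretens} gives no information about $L(m)$ and no conflict with its reality. Third, your fallbacks are circular: ``once $\CC_\xi$ is a monoidal categorification every simple object is real'' is a consequence of the theorem you are proving, and ``the factorization statement in step~(4) of \S\ref{estacat}'' is not an available lemma --- it is precisely the remaining thing to be proved.

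The paper's route for step~(4) needs no reality hypothesis and runs in the opposite direction. One first proves the combinatorial lemma that \emph{every} dominant monomial $m\in\Y_\xi$ admits a factorization $m=\prod_k m_k$ into highest monomials of modules of $\P$ with pairwise noncrossing parameters (a standard, and in fact unique, noncrossing matching in type $A$). Then Proposition~\ref{tensa} and Theorem~\ref{irretens} show that $\bigotimes_k L(m_k)$ is simple, and since its unique highest dominant monomial is $m$, it is isomorphic to $L(m)$. Hence every simple object of $\CC_\xi$ --- real or not --- is a tensor product of pairwise noncrossing members of $\P$, so its class is a cluster monomial; in particular every simple object is real, and uniqueness of the noncrossing matching makes the correspondence between cluster monomials and simple classes a bijection, which is exactly Definition~\ref{defmoncat}. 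Your proposal never supplies this decomposition lemma, and replaces it with the implication ``reality forces noncrossing,'' which both misidentifies the quantifier (existence of one noncrossing pairing, not noncrossing of all pairings) and rests on an application of Theorem~\ref{irretens} to a tensor product that $L(m)$ was never shown to equal. Without the lemma, the converse inclusion does not close.
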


\begin{remark}\label{othero}
{\rm 
(a)\ \ It follows from Theorem~\ref{TH1} that when $\xi_i = i$, every
simple module in $\mathcal{C}_\xi$ can be factorized as a tensor product of evaluation representations.

\smallskip
(b)\ \ 
For an arbitrary $\xi$, a theorem similar to Theorem~\ref{TH1} can be proved in an analog but 
slightly more complicated way. 
A subset $J= [i,j]\subset I$  $(1\leq i \leq j \leq n)$ has a natural
orientation induced by $\xi$. Let $J_+$ (resp. $J_-$) be the set of sources (resp. sinks) of $J$. 
The prime objects in $\mathcal{C}_\xi$ are the simple modules
$$L(J) := L\left(\prod_{k\in J_-}Y_{k,\xi_k}\prod_{k\in J_+}Y_{k,\xi_k + 2}\right)\quad ,
\quad L(i) := L(Y_{i,\xi_i})\quad , \quad L'(i) := L(Y_{i,\xi_i + 2}).$$
Note that $L(J)$ is not an evaluation representation if $J$ has several sources or several sinks. 

\smallskip
(c)\ \
Different choices of $\xi$ yield different subcategories $\mathcal{C}_\xi$.
These subcategories seem to be quite similar, but they are not
equivalent in general. For example, in type $A_3$, consider
the categories $\mathcal{C}_\xi$ with $\xi_i = i$ and
$\mathcal{C}_{\phi}$ with $\phi_1 = 1$, $\phi_2 = 2$, $\phi_3 = 1$.
Both categories are monoidal categorifications of a cluster algebra
of type $A_3$ with $3$ coefficients. The category $\mathcal{C}_\phi$ was
studied in \cite{HL}. In particular, we the have following relation in the Grothendieck ring of $\mathcal{C}_\phi$ :
$$[L(Y_{1,1}Y_{2,4}Y_{3,1})][L(Y_{2,2})]
= [L(Y_{1,1})][L(Y_{3,1})][L(Y_{2,2}Y_{2,4})]
+ [L(Y_{1,1}Y_{1,3})][L(Y_{3,1}Y_{3,3})].$$
But by (\ref{ident}), in the Grothendieck ring of $\mathcal{C}_\xi$, a simple constituent of the 
tensor product of two simple prime representations can be factorized as a tensor product of at most $2$ non trivial
representations. Hence, $\mathcal{C}_\xi$ and $\mathcal{C}_\phi$ are {\it not} equivalent.}
\end{remark}

\section{Type $D$} 

\subsection{}\label{clusterD}
Let $\mathcal{A}$ be a cluster algebra of type $D_n$ in the Fomin-Zelevinsky classification.
The clusters of $\mathcal{A}$ are now encoded by the \emph{centrally symmetric} triangulations of a regular 
polygon ${\mathbf P}$ with $2n$ vertices, labeled by $a=0,\ 1,\ldots, 2n-1$ \cite[\S 12.4]{FZ2}
(note that a more modern way to record the
combinatorics of a cluster algebra of type $D_n$ would be by means of a once-punctured $n$-gone and tagged arcs \cite{FST}).
A segment $[a,b]$ joining two vertices is called a \emph{diagonal} if it meets the interior of ${\mathbf P}$,
and a \emph{side} otherwise.
Let $\Theta$ be the $180^{\circ}$ rotation of~$\mathbf{P}$, and for a vertex~$a$,
write $\overline{a}=\Theta(a)$.
Each \emph{non frozen} cluster variable is labeled by a $\Theta$-orbit on the set of diagonals of~${\mathbf P}$.
More precisely, to each non trivial $\Theta$-orbit $([a,b],\,[\overline{a},\overline{b}])$ (with $b \not =\overline{a}$)
we attach a single cluster variable 
\[
x_{ab} = x_{\overline{a}\overline{b}}.
\]
But we associate with every $\Theta$-fixed diagonal $[a,\overline{a}]$ (or \emph{diameter}) two \emph{different} cluster variables
\[
x_{a\overline{a}} \not = x_{\widetilde{a\overline{a}}}.  
\]
We may think of $[a,\overline{a}]$ and $\widetilde{[a,\overline{a}}]$
as two different $\Theta$-orbits, supported on the same segment but with two different colors.
Given two $\Theta$-orbits, one of which at least being non trivial,
we say that they are \emph{noncrossing} if they
do not meet in the interior of~$\mathbf{P}$. 
We also declare that 
two $\Theta$-fixed diagonals are \emph{noncrossing} if and only if 
they have the same support or the same color.
A \emph{centrally symmetric triangulation} of $\mathbf{P}$ is then 
a maximal subset of pairwise noncrossing $\Theta$-orbits of diagonals.
Such a triangulation always consists of $n$
different $\Theta$-orbits.
For instance, for $n=4$, the following are two distinct triangulations
\[
\left\{([1,\overline{3}],\ [\overline{1},3]),\ 
([2,\overline{3}],\ [\overline{2},3]),\  
[3,\overline{3}],\ \widetilde{[3,\overline{3}]}\right\},
\qquad
\left\{([1,\overline{3}],\ [\overline{1},3]),\ 
([2,\overline{3}],\ [\overline{2},3]),\  
[3,\overline{3}], [2,\overline{2}]\right\}. 
\]
To the $\Theta$-orbits of
the sides $[a,b]$ of $\mathbf{P}$ we can also attach some frozen variables 
$x_{ab} = x_{\overline{a}\overline{b}}$. We specialize
\[
x_{01} = x_{\overline{n-1},\,0} = 1. 
\]
Our initial seed for the cluster algebra $\A$ will correspond
to the triangulation
\[
\left\{\Theta([a,\overline{n-1}]) \mid 1\le a \le n-2\right\}
\cup
\left\{[n-1,\overline{n-1}],\widetilde{[n-1,\overline{n-1}]}\right\}. 
\]
More precisely, it is described by the following
quiver
\[
\matrix{& & & & & &  &  & x_{\widetilde{n-1,\,\overline{n-1}}}&\leftarrow & f_{n-1}  \cr 
& & & & & & & & \uparrow &  \searrow& \cr 
x_{1,\,\overline{n-1}}& \rightarrow & x_{2,\,\overline{n-1}} & \rightarrow &
 \cdots&\rightarrow & x_{n-3,\,\overline{n-1}}&\rightarrow & x_{n-2,\,\overline{n-1}} &\leftarrow & x_{n-2,\,n-1} \cr
      \uparrow &\swarrow & \uparrow &\swarrow& &\swarrow&\uparrow &\swarrow& \downarrow  &\nearrow\cr
 x_{12} &  & x_{23} & & \cdots 
&  &x_{n-3,\,n-2}  &   & x_{n-1,\,\overline{n-1}} &\leftarrow & f_n
}
\]
where $f_n$ and $f_{n-1}$ are two additional frozen variables, which
can not be encoded by sides of $\mathbf{P}$.
The principal part of the quiver (obtained by removing the frozen vertices 
$x_{i,\,i+1}\ (1\le i\le n-2)$, $f_{n-1}$, $f_n$, and the arrows
incident to them) is a Dynkin quiver $Q$ of type $D_n$, hence $\mathcal{A}$ is 
indeed a cluster algebra of type $D_n$ in the Fomin-Zelevinsky classification. 

One can easily check that, because of this particular choice of frozen
variables, $\A$ belongs to the class of cluster algebras
studied in \cite{GLS1}. 
More precisely, let us label the vertices of $Q$ by $\{1,\ldots, n\}$
so that $x_{i,\,\overline{n-1}}$ lies at vertex $i$ for $i\le n-1$,
and $x_{\widetilde{n-1,\,\overline{n-1}}}$ lies at vertex $n$.
Then $\A$ is the same as the algebra attached in \cite{GLS1} to $Q$
and the Weyl group element 
\[
w= c^2 = (s_ns_{n-1}s_{n-2}\cdots s_1)^2.
\]
It follows from \cite[Theorem 16.1 (i)]{GLS1} that $\A$ is a polynomial ring in $2n$ generators.
These generators are the initial cluster variables 
\[
z_i:=x_{i,\,\overline{n-1}}\quad (1\le i\le n-1),\quad
z_n:=x_{\widetilde{n-1,\,\overline{n-1}}},
\]
together with the new cluster variables $z^\dag_{i}\ (1\le i\le n)$ produced by the sequence of mutations
\begin{equation}\label{seqmut}
 \mu_{n}\circ \mu_{n-1}\circ\mu_{n-2}\circ \cdots \circ \mu_{2}\circ \mu_{1}.
\end{equation}

Recall from \cite{FZ2} that, our initial cluster being fixed, 
the cluster variables of $\A$ also have a natural labelling by 
almost positive roots. The correspondence is as follows.
First, the $\Theta$-orbits of the initial triangulation are labeled
by negative simple roots:
\[
\Theta([i,\overline{n-1}]) \mapsto -\alpha_i,\quad  (1\le i \le n-2),\qquad
[n-1,\overline{n-1}] \mapsto -\a_{n-1},\qquad
[\widetilde{n-1,\overline{n-1}}] \mapsto -\a_n. 
\]
Any other $\Theta$-orbit $x$ is mapped to the positive root $\sum_i c_i\a_i$,
where the diagonals representing $x$ cross the diagonals representing
$-\a_i$ at $c_i$ pairs of centrally symmetric
points 
(counting an intersection of two diameters of different colors and support
as one such pair).

In \cite{yz, y}, a different labelling for the cluster variables is used.
First the choice of an
\emph{acyclic} initial seed is encoded by the choice of a Coxeter element $c$.
For our choice of initial seed, this Coxeter element is  
\[
c = s_ns_{n-1}s_{n-2}\cdots s_1.
\]
Next the cluster variables 
are labeled by weights of the form
\[
c^m \varpi_{i},\quad (i\in I,\ 0\le m \le h(i,c)),
\]
where $h(i,c)$ is the smallest integer such that $c^{h(i,c)}\varpi_i = w_0\varpi_i$.
The correspondence between the two labellings is as follows.
To the fundamental weight $\varpi_i$ corresponds $-\a_i$, 
and to the weight $c^m \varpi_{i}\ (m\ge 1)$ corresponds the positive root
$\b = c^{m-1}\varpi_i - c^m \varpi_{i}$. 

\begin{example}\label{exD4}
{\rm
We illustrate all these definitions in the case $n=4$. 
Here $\mathbf{P}$ is a regular octogon, with vertices labeled
by $0, 1, 2, 3, \overline{0}, \overline{1}, \overline{2}, \overline{3}$.
Our choice of initial triangulation is 
\[
\left\{([1,\overline{3}],\ [\overline{1},3]),\ 
([2,\overline{3}],\ [\overline{2},3]),\  
[3,\overline{3}],\ \widetilde{[3,\overline{3}]}\right\},
\] 
which corresponds to the Coxeter element $c = s_4s_3s_2s_1$.
The sixteen $\Theta$-orbits of diagonals (represented by one of their 
elements), and the corresponding
indexings by almost positive roots, and by weights, are given in
the table below:  
\[
\begin{array}{|c|c|c|}
\hline
[1,\overline{3}] & -\a_1 & \varpi_1\\[1mm]
[2,\overline{3}] & -\a_2 & \varpi_2\\[1mm]
\widetilde{[3,\overline{3}]} & -\a_3 & \varpi_3\\[1mm]
[3,\overline{3}] & -\a_4 & \varpi_4\\[1mm]
[0,2] & \a_1 & c^3\varpi_1  \\[1mm]
[1,3] & \a_2 & c^2\varpi_1  \\[1mm]
[2,\overline{2}] & \a_3 & c\varpi_3  \\[1mm]
\widetilde{[2,\overline{2}]} & \a_4 & c\varpi_4  \\[1mm]
\hline
\end{array}
\qquad\qquad
\begin{array}{|c|c|c|}
\hline
[0,3] & \a_1+\a_2 & c^3\varpi_2\\[1mm]
[1,\overline{1}] & \a_2+\a_3 & c^2\varpi_4\\[1mm]
\widetilde{[1,\overline{1}]} & \a_2 + \a_4 & c^2\varpi_3\\[1mm]
[0,\overline{0}] & \a_1+\a_2+\a_3 & c^3\varpi_3\\[1mm]
\widetilde{[0,\overline{0}]} & \a_1+\a_2+\a_4 & c^3\varpi_4  \\[1mm]
[2,\overline{1}] & \a_2+\a_3+\a_4 & c\varpi_2  \\[1mm]
[2,\overline{0}] & \a_1+\a_2+\a_3+\a_4 & c\varpi_1  \\[1mm]
[1,\overline{0}] & \a_1+2\a_2+\a_3+\a_4 & c^2\varpi_2  \\[1mm]
\hline
\end{array}
\]
}
\end{example}

\subsection{} Let $\g$ be of type $D_n$. We will write $Y_{0,p} = Y_{n+1,p} = 1$ for $p\in\Z$. 
We choose the height function 
$\xi_i = n - 1 - i$ if $i < n$ and $\xi_n = 0$. 
This induces a partial order $\preceq$ on $\{1,\ldots , n\}$ defined by
\[
i \prec j \quad \Longleftrightarrow \quad \xi_i < \xi_j.
\]
Note that $n-1$ and $n$ are not comparable for $\preceq$.
Moreover, for convenience, we extend this to $\{0,\ldots , n+1\}$
by declaring that $0$ is a maximal element and $n+1$ a minimal element
for $\preceq$.

We define the following family $\mathcal{P}$ of representations in $\mathcal{C}_\xi$:
\[
\begin{array}{ll}
L(i,j) = L(Y_{i,\xi_i}Y_{j,\xi_j + 2}), &  (n+1\preceq i\preceq j\preceq 0),\\[2mm]
L(i,j)^\dag 
= L(Y_{n,0}Y_{n-1,0}Y_{i,\xi_i+2}Y_{j,\xi_j + 2}),\quad& (n-2\preceq j \prec i\preceq 0).
\end{array}
\]
Since $\A$ and $\mathcal{R}_\xi$ are both polynomial rings over $\Z$ with $2n$ generators,
the assignment 
\[
z_i \mapsto [L(n+1,i)]=[L(Y_{i,\xi_i+2})], \quad 
z_i^\dag \mapsto [L(i,0)]=[L(Y_{i,\xi_i})], \qquad (1\le i \le n),  
\]
extends to a ring isomorphism $\iota\colon\A \isom \mathcal{R}_\xi$.
Thus $\mathcal{R}_\xi$ is endowed with the structure of a cluster algebra.
Moreover, using the $T$-system equations for calculating  
the products 
\[
[L(Y_{i,\xi_i})][L(Y_{i,\xi_i+2})] = z_iz_i^\dag,
\]
and comparing them with the exchange relations involved in the sequence of
mutations (\ref{seqmut}), we can easily check that the frozen 
variables of $\A$ are mapped by $\iota$ to the classes $[L(i,i)]=[L(Y_{i,\xi_i}Y_{i,\xi_i+2})]$.
More precisely,
\[
\iota(f_{n-1})=[L(n-1,n-1)],\quad
\iota(f_{n})=[L(n,n)],\quad
\iota(x_{i,i+1}) = [L(i,i)], \quad (1\le i \le n-2).
\]
Therefore $\iota$ maps the initial seed of $\A$ to
\[
\matrix{& & & & & &     L(n+1,\,n-1)&\leftarrow &L(n-1,\,n-1)  \cr 
& & & & & &   \uparrow &  \searrow& \cr 
L(n+1,1)& \rightarrow & L(n+1,2) & \rightarrow &
 \cdots  & \rightarrow & L(n+1,\,n-2) &\leftarrow &L(n-2,\,n-2) \cr
         \uparrow &\swarrow & \uparrow &\swarrow&  &\swarrow& \downarrow  &\nearrow\cr
 L(1,1) &  & L(2,2) & & \cdots 
     &  & L(n+1,\,n)&\leftarrow &L(n,\,n)
}
\]

\subsection{} Let us compute the truncated $q$-characters of the representations in $\mathcal{P}$. As in  \S\ref{sect4.2}, the modules $L(i,j)$ are prime minimal affinizations. We have 
\[
\begin{array}{llll}
\bchi_q(L(n+1,j)) &=& Y_{j,\xi_j + 2},&\\[2mm]
\bchi_q(L(i,j)) &=& Y_{i,\xi_i}Y_{j,\xi_j + 2} ( 1 + A_{i,\xi_i+1}^{-1} +\cdots + (A_{i,\xi_i+1}\cdots A_{j-1,\xi_j})^{-1}),& (n-1\preceq i),\\[2mm]
\bchi_q(L(n,j)) &= &Y_{n,0}Y_{j,\xi_j + 2} ( 1 + A_{n,1}^{-1} \chi_j),&(0\leq j\leq n-2),
\end{array}
\]
where $\chi_j := 1 + A_{n-2,2}^{-1} + \cdots + (A_{n-2,2}\cdots A_{j+1,\xi_j})^{-1}$.
In general the $L(i,j)^\dag$ are not minimal affi\-ni\-zations. However, we have:

\begin{lemma}\label{formu} For $n-2\preceq j \prec i\preceq 0$, the representation $L(i,j)^\dag$ is prime and 
$$\bchi_q(L(i,j)^\dag)) = Y_{n,0}Y_{n-1,0}Y_{i,\xi_i+2}Y_{j,\xi_j + 2}(1 + (A_{n-1,1}^{-1} + A_{n,1}^{-1})\chi_j + A_{n-1,1}^{-1}A_{n,1}^{-1}\chi_i\chi_j).$$
\end{lemma}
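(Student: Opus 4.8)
The plan is to prove the character formula by squeezing $\bchi_q(L(i,j)^\dag)$ between two bounds, and then to read off primality from the resulting list of dominant monomials. Throughout write $m = Y_{n,0}Y_{n-1,0}Y_{i,\xi_i+2}Y_{j,\xi_j + 2}$ and let $P$ denote the polynomial $1 + (A_{n-1,1}^{-1}+A_{n,1}^{-1})\chi_j + A_{n-1,1}^{-1}A_{n,1}^{-1}\chi_i\chi_j$ of the statement, so that the claim is $\bchi_q(L(m)) = mP$. Note first that by the already-computed formulas one has $\bchi_q(L(n-1,a)) = Y_{n-1,0}Y_{a,\xi_a+2}(1+A_{n-1,1}^{-1}\chi_a)$ and $\bchi_q(L(n,a)) = Y_{n,0}Y_{a,\xi_a+2}(1+A_{n,1}^{-1}\chi_a)$, the ingredients of both bounds.

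For the upper bound I would observe that $m$ is the highest monomial of each of the two tensor products $L(n-1,i)\otimes L(n,j)$ and $L(n,i)\otimes L(n-1,j)$, which lie in $\CC_\xi$ by Lemma~\ref{tensorCQ}; hence $L(m)$ is a composition factor of both and $\bchi_q(L(m))\preceq$ the truncated $q$-character of each. Since truncation is an algebra homomorphism on $\mathcal{R}_\xi$, these two characters are the products $m(1+A_{n-1,1}^{-1}\chi_i)(1+A_{n,1}^{-1}\chi_j)$ and $m(1+A_{n,1}^{-1}\chi_i)(1+A_{n-1,1}^{-1}\chi_j)$. Taking the coefficientwise minimum of the two and using $\chi_j\preceq\chi_i$, the unwanted terms $A_{n-1,1}^{-1}\chi_i$ and $A_{n,1}^{-1}\chi_i$ get cut down to $A_{n-1,1}^{-1}\chi_j$ and $A_{n,1}^{-1}\chi_j$, leaving exactly $mP$. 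This step requires the routine check that the monomials of the two products are pairwise distinct, so that the meet of the two bounds is genuinely $mP$.

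For the lower bound I would run the inductive branching procedure of \S\ref{tools} starting from $m$. Restricting to the single fork nodes $\{n-1\}$ and $\{n\}$ produces $mA_{n-1,1}^{-1}$ and $mA_{n,1}^{-1}$, and restricting further along the linear part $J=\{1,\dots,n-2\}$ propagates the factor $\chi_j$, yielding the middle term $(A_{n-1,1}^{-1}+A_{n,1}^{-1})\chi_j$. The main obstacle is the top term $A_{n-1,1}^{-1}A_{n,1}^{-1}\chi_i\chi_j$: a single fork descent lets the chain run down only as far as node $j$ (it is stopped by $Y_{j,\xi_j+2}$), and the chain reaches node $i$ only once both descents $A_{n-1,1}^{-1}$ and $A_{n,1}^{-1}$ have been performed. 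I would force the needed monomials into $\bchi_q(L(m))$ exactly as in the proof of Proposition~\ref{tensa}(b): at each stage exhibit a minuscule, hence simple, tensor product of the shape $L(Y_{\cdots})\otimes L(Y_{\cdots})$ and invoke \cite[Corollary 4.11]{miniaff}. Together with the upper bound this gives $\bchi_q(L(i,j)^\dag)=mP$.

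It remains to prove primality. The formula shows that $mP$ contains exactly two dominant monomials, $m$ and a single deeper one $m''$, and a direct inspection shows that $m''/m$ involves both $A_{n-1,1}^{-1}$ and $A_{n,1}^{-1}$ and lowers the exponents of $Y_{n,0}$, $Y_{n-1,0}$ and $Y_{j,\xi_j+2}$ each by one. Suppose $L(m)\cong L(m_1)\otimes L(m_2)$ nontrivially. Since the dominant monomials of a tensor product are the products of those of the factors, and there are only two, one factor is minuscule and the other carries the whole descent $m\to m''$; because within $\CC_\xi$ the exponent of $Y_{n,0}$ (resp. $Y_{n-1,0}$) can be lowered only by $A_{n,1}^{-1}$ (resp. $A_{n-1,1}^{-1}$), this forces $Y_{n,0}$, $Y_{n-1,0}$ and $Y_{j,\xi_j+2}$ all to lie in the non-minuscule factor, so that $m_1 = Y_{n,0}Y_{n-1,0}Y_{j,\xi_j+2}$ and $m_2 = Y_{i,\xi_i+2}$. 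But the same upper-bound argument applied to $L(Y_{n,0}Y_{n-1,0}Y_{j,\xi_j+2})$, bounding it by $L(n,j)\otimes L(Y_{n-1,0})$, shows that this module has a single dominant monomial; then $L(m_1)\otimes L(m_2)$ would have a single dominant monomial, contradicting that $L(m)$ has two. The case of two non-minuscule factors is excluded because the four product monomials would produce at least three distinct dominant monomials. Hence $L(i,j)^\dag$ is prime.
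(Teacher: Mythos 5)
Your computation of the truncated $q$-character is essentially the paper's argument: the paper also sandwiches $\bchi_q(L(i,j)^\dag)$ between the two products $\bchi_q(L(n,j)\otimes L(n-1,i))$ and $\bchi_q(L(n,i)\otimes L(n-1,j))$, deducing that the character has the form $m(1+A_{n-1,1}^{-1}A+A_{n,1}^{-1}B+A_{n-1,1}^{-1}A_{n,1}^{-1}C)$ with $A,B\preceq \chi_j$ and $C\preceq\chi_i\chi_j$, and then forces equality from below. For the lower bound, however, the paper does not re-run the minuscule/\cite{miniaff} argument of Proposition~\ref{tensa}(b); it applies Lemma~\ref{fact} together with the already-proved type $A$ criterion (Proposition~\ref{tensa}) on the subdiagrams $J=\{1,\dots,n-1\}$ and $J=\{1,\dots,n-2\}$, obtaining $A=B=\chi_j$ and $C=\chi_i\chi_j$ \emph{with exact multiplicities}. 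This matters: $\chi_i\chi_j$ contains monomials of multiplicity $2$ (see $2t_3t_4t_2$ in the $D_4$ table for $x_{1\bar 0}$), and your sketch ``exhibit a minuscule tensor product at each stage'' produces occurrences of monomials but does not obviously account for multiplicities, whereas the $L_J$-factorization of Lemma~\ref{fact} does.

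The genuine gap is in your primality argument, which fails at two points. First, your premise that ``the dominant monomials of a tensor product are the products of those of the factors'' is false: a dominant monomial of $\bchi_q(L(m_1))\,\bchi_q(L(m_2))$ can be a product of two \emph{non-dominant} monomials. The paper's own Proposition~\ref{tensa}(b) is an instance: each evaluation module $L(i,j)$ has a unique dominant monomial in its truncated character, yet the crossing product has two ($M$ and $M'$). Second, your claim that $L(Y_{n,0}Y_{n-1,0}Y_{j,\xi_j+2})$ has a single dominant monomial is false: since $Y_{0,p}=1$, this module is precisely $L(0,j)^\dag$, a member of the family under study, and by the very formula of Lemma~\ref{formu} (with $\chi_0$ in place of $\chi_i$) its truncated character contains the second dominant monomial $Y_{n,0}Y_{n-1,0}Y_{j,\xi_j+2}A_{n,1}^{-1}A_{n-1,1}^{-1}A_{n-2,2}^{-1}\cdots A_{j,n-j}^{-1}$; concretely, for $n=4$, $j=2$ this is $Y_{2,3}Y_{3,0}Y_{4,0}\,t_3t_4t_2=Y_{1,2}Y_{2,1}$, visible in the paper's $D_4$ table in the row of $x_{2\overline{0}}$. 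Your upper bound by $L(n,j)\otimes L(Y_{n-1,0})$ does not exclude this monomial, since $\chi_0$ contains the full descent to level $j$. So the intended contradiction evaporates: $L(0,j)^\dag\otimes L(Y_{i,\xi_i+2})$ has two dominant monomials, just like $L(i,j)^\dag$, and this factorization cannot be ruled out by counting dominant monomials. The paper's (correct and shorter) route is to read primality off the explicit formula itself: a nontrivial factorization would give $1+(A_{n-1,1}^{-1}+A_{n,1}^{-1})\chi_j+A_{n-1,1}^{-1}A_{n,1}^{-1}\chi_i\chi_j=(1+A_{n-1,1}^{-1}X)(1+A_{n,1}^{-1}Y)$ (factors free of $A_{n-1,1}^{-1},A_{n,1}^{-1}$ being excluded by comparing the degree-zero part in these variables, and noting that $\bchi_q(L(Y_{i,\xi_i+2}))$ is a single monomial), whence $X=Y=\chi_j$ and $XY=\chi_i\chi_j$, forcing $\chi_i=\chi_j$ --- impossible since $j\prec i$ strictly. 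You should replace your dominant-monomial count by this factorization argument.
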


\begin{proof} As $\bchi_q(L(i,j)^\dag)\preceq \bchi_q(L(n,j)\otimes 
L(n-1,i))$ and $\bchi_q(L(i,j)^\dag)\preceq \bchi_q(L(n,i)\otimes 
L(n-1,j))$ there are $A, B \preceq \chi_j$ and
$C\preceq \chi_i\chi_j$ such that
$$\bchi_q(L(i,j)^\dag) = Y_{n,0}Y_{n-1,0}Y_{i,\xi_i+2}Y_{j,\xi_j + 2}(1 + 
A_{n-1,1}^{-1} A + A_{n,1}^{-1} B + A_{n,1}^{-1}A_{n-1,1}^{-1}C).$$
From Proposition \ref{tensa} with $J = \{1,\cdots, n-1\}$, we have
$$Y_{n,0}L_J(Y_{n-1,0}Y_{j,n-j+1}) L_J(Y_{i,n-i+1}) = 
L_J(Y_{n,0}Y_{n-1,0}Y_{i,n-i+1} Y_{j,n-j+1}).$$
Hence, by \S\ref{tools}, we have $A = \chi_j$. The proof that $B = \chi_j$ 
is analog. Similarly, from Proposition \ref{tensa} with $J = \{1,\cdots, 
n-2\}$, we have
$$L_J(Y_{n-2,1}Y_{i,n-i+1}) L_J(Y_{n-2,1}Y_{j,n-j+1}) = 
L_J(Y_{n-2,1}^2Y_{i,n-i+1}Y_{j,n-j+1}).$$
So
$$C = 
(Y_{n-2,1}^2Y_{i,n-i+1}Y_{j,n-j+1})^{-1}\bchi_q(L(Y_{n-2,1}^2Y_{i,n-i+1}Y_{j,n-j+1})) 
= \chi_i\chi_j.$$
This explicit formula shows that $\bchi_q(L(i,j)^\dag)$ can not be 
factorized and so $L(i,j)^\dag$ is prime.
\cqfd
\end{proof}

Let $\P':=\P\setminus\{L(i,i)\mid 1\le i \le n\}$. 
We introduce the following bijection between the non frozen cluster variables of $\A$ and
the representations in $\P'$.


\[
\begin{array}{llll}
x_{ij}&\mapsto& L(j-1,i), & (0\leq i\leq j-2\leq n - 3), \\[1mm]
x_{i\,\overline{j}} &\mapsto & L(j,i)^\dag, & (0\leq j < i \leq n-2), \\[1mm]
x_{i\,\overline{i}} &\mapsto & L(n-1,i), &   (0\leq i\leq n-2), \\[1mm]
x_{\widetilde{i\,\overline{i}}} &\mapsto & L(n,i), & (0\leq i\leq n-2), \\[1mm]
x_{i,\,\overline{n-1}} & \mapsto & L(n+1,i), & (1\leq i\leq n-2),  \\[1mm]
x_{\widetilde{n-1,\,\overline{n-1}}} & \mapsto & L(n+1,n-1),\\[1mm]  
x_{n-1,\,\overline{n-1}} & \mapsto & L(n+1,n). 
\end{array}
\]
One can check that under this correspondence, the renormalized truncated $q$-characters 
for the representations in $\P'$ coincide with the $F$-polynomials 
of the cluster variables of $\A$ calculated in \cite{yz, y}. 
One then deduces that this bijection is the restriction of the ring automorphism
$\iota$ to the set of non frozen cluster variables.

\begin{example}
{\rm
We continue Example~\ref{exD4}.
The table below gives the list of cluster variables of $\A$
together with the corresponding
representations of $\P'$ and their
truncated $q$-characters.
Here $t_i = A_{i,\xi_i +1}^{-1}$.

\[
\begin{array}{|l|l|l|}
\hline
x_{02}&L(1,0)  &Y_{1,2}(1 + t_1)\\[1mm]
x_{03}&L(2,0)  &Y_{2,1}(1 + t_2 + t_2t_1)\\[1mm]
x_{13}&L(2,1)  & Y_{1,4}Y_{2,1}(1 + t_2)\\[1mm]
x_{1\overline{0}}&L(0,1)^\dag & Y_{1,4}Y_{3,0}Y_{4,0}(1 + t_3 + t_3t_2 + t_4 + t_4t_2 + t_3t_4 + 2 t_3t_4t_2 + t_3t_4t_2^2 + t_3t_4t_2t_1 + t_3t_4t_2^2t_1)\\[1mm]
x_{2\overline{0}}&L(0,2)^\dag &Y_{2,3}Y_{3,0}Y_{4,0}(1 + t_3 + t_4 + t_3t_4 + t_3t_4t_2 + t_3t_4t_2t_1)\\[1mm]
x_{2,\overline{1}}&L(1,2)^\dag & Y_{1,4}Y_{2,3}Y_{3,0}Y_{4,0}(1 + t_3 + t_4 + t_3t_4 + t_3t_4t_2)\\[1mm]
x_{0\overline{0}}&L(3,0) & Y_{3,0}(1 + t_3 + t_3t_2 + t_3t_2t_1)\\[1mm]
x_{1\overline{1}}&L(3,1) &  Y_{1,4}Y_{3,0}(1 + t_3 + t_3t_2)\\[1mm]
x_{2\overline{2}}&L(3,2)& Y_{2,3}Y_{3,0}(1 + t_3)\\[1mm]
x_{\widetilde{0\overline{0}}}&L(4,0) & Y_{4,0}(1 + t_4 + t_4t_2 + t_4t_2t_1)\\[1mm]
x_{\widetilde{1\overline{1}}}&L(4,1) &  Y_{1,4}Y_{4,0}(1 + t_4 + t_4t_2)\\[1mm]
x_{\widetilde{2\overline{2}}}&L(4,2) &Y_{2,3}Y_{4,0}(1 + t_4)\\[1mm]
x_{1\overline{3}}&L(5,1) &Y_{1,4}\\[1mm] 
x_{2\overline{3}}&L(5,2) &Y_{2,3}\\[1mm] 
x_{\widetilde{3\overline{3}}}&L(5,3) &Y_{3,2}\\[1mm] 
x_{3\overline{3}}&L(5,4)&Y_{4,2}\\
\hline
\end{array}
\]
}
\end{example}

\subsection{} 
We now describe which tensor products of representations of $\P$ are simple.

\begin{proposition} \label{lastprop}
We have the following :
\begin{itemize}
\item[(a)] Suppose $\{i,k\} \neq \{n-1,n\}$. 
Then $L(i,j)\otimes L(k,l)$ is not simple if and only if 
$i \prec k\preceq j \prec l$ or $k \prec i \preceq l \prec j$.

\item[(b)] Suppose $\{i,k\} = \{n-1,n\}$. 
Then $L(i,j)\otimes L(k,l)$ is simple if and only if $j = l$ or $i = j$ or $k = l$.

\item[(c)] Suppose $j\prec i$ and $l\prec k$. 
Then $L(i,j)^\dag\otimes L(k,l)^\dag$ is simple if and only if 
$j\preceq l \prec k\preceq i$ or $l\preceq j \prec i\preceq k$.

\item[(d)] Suppose $i\succeq n-2$ and $l\prec k$. 
Then $L(i,j)\otimes L(k,l)^\dag$ is simple if and only if 
$i = j$ or $i\prec j\preceq l\prec k$ or $l\prec k\prec i\prec j$ or $l\prec i \prec j\preceq k$.

\item[(e)] Suppose $i\prec n-2$ and $l \prec k$.
Then $L(i,j)\otimes L(k,l)^\dag$ is simple if and only if 
$i = j$ or ($(i\neq n+1)$ and $l\preceq j\preceq k$) or ($i = n+1$ and $k\preceq j$).
\end{itemize}
\end{proposition}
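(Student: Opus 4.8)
The plan is to reduce every case to a comparison of dominant monomials. Since the truncation map is an injective algebra homomorphism $\K_\xi\to\Y_\xi$, we have $\bchi_q(L(m))\,\bchi_q(L(m'))=\bchi_q(L(m)\otimes L(m'))$, and because $[L(m)\otimes L(m')]=[L(mm')]+\sum_S c_S[S]$ in $\mathcal R_\xi$ with each extra constituent $S$ of highest weight $\prec mm'$, applying truncation gives $\bchi_q(L(mm'))\preceq \bchi_q(L(m))\,\bchi_q(L(m'))$. Hence $L(m)\otimes L(m')$ is simple if and only if these two truncated characters are equal, that is, if and only if every dominant monomial of the product $\bchi_q(L(m))\,\bchi_q(L(m'))$ already occurs, with the same multiplicity, in $\bchi_q(L(mm'))$; any surplus dominant monomial is the highest weight of a further simple constituent. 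For the ``not simple'' assertions I will exhibit such a surplus dominant monomial, most economically via the cluster structure: under $\iota$ the members of $\P'$ are the non frozen cluster variables of $\A$, two of them are compatible exactly when the corresponding $\Theta$-orbits of diagonals of $\mathbf P$ are noncrossing, and two incompatible cluster variables satisfy a two-term exchange relation $[S][S']=[\,\cdot\,]+[\,\cdot\,]$ in $\A\cong\mathcal R_\xi$, which forces $S\otimes S'$ to be reducible. For the ``simple'' assertions I will show the product character collapses to $\bchi_q(L(mm'))$.

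\textbf{Case (a).} This is the branch-free case, where the modules $L(i,j)$ carry the type $A$ truncated characters displayed just before Lemma~\ref{formu}. The hypothesis $\{i,k\}\neq\{n-1,n\}$ ensures that the endpoints entering the inequalities lie on a single chain of $\preceq$ (the incomparability of $n-1$ and $n$ being the sole obstruction), so the conditions $i\prec k\preceq j\prec l$ and $k\prec i\preceq l\prec j$ are precisely the type $A$ crossing conditions of \S\ref{sect4.2} transported to $\preceq$. I will then reproduce the argument of Proposition~\ref{tensa}: in the noncrossing case the product has the unique dominant monomial $mm'$, so the tensor product is simple, while in the crossing case relation~(\ref{ident}), transported to $\preceq$, exhibits the two-term decomposition.

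\textbf{Case (b).} Taking $i=n-1$ and $k=n$, I will multiply the explicit characters $\bchi_q(L(n-1,j))=Y_{n-1,0}Y_{j,\xi_j+2}(1+A_{n-1,1}^{-1}+\cdots)$ and $\bchi_q(L(n,l))=Y_{n,0}Y_{l,\xi_l+2}(1+A_{n,1}^{-1}\chi_l)$, and count the dominant monomials of the product. The descending $A^{-1}$-strings issuing from the two branch nodes recombine into a dominant monomial distinct from $mm'$ unless the two strings terminate at the same place; a direct inspection shows that this degeneration occurs exactly when $j=l$, or $j=n-1$ (that is $i=j$), or $l=n$ (that is $k=l$), which is the stated criterion.

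\textbf{Cases (c), (d), (e), and the main obstacle.} These involve the $\dag$-modules and are where I expect the real work to lie. I will start from the explicit formula of Lemma~\ref{formu} for $\bchi_q(L(i,j)^\dag)$, form the relevant product character, and count its dominant monomials by the device already used to prove that lemma: restrict to $J=\{1,\dots,n-1\}$ and to $J=\{1,\dots,n-2\}$, and use Lemma~\ref{fact} together with the decomposition~(\ref{decompj}) and Proposition~\ref{tensa} to resolve the type $A$ factors $A,B\preceq\chi_j$ and $C\preceq\chi_i\chi_j$, tracking which recombinations of the $A_{n-1,1}^{-1}$- and $A_{n,1}^{-1}$-strings with the $\chi$-factors close up into dominant monomials. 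The dichotomy between (d) and (e), according to whether $i\succeq n-2$ or $i\prec n-2$, records whether the support of $L(i,j)$ reaches the fork region carrying the factor $Y_{n,0}Y_{n-1,0}$ of the $\dag$-module, and this is what produces the different boundary inequalities. The main obstacle is exactly this dominant-monomial bookkeeping: the $\dag$-characters carry genuine multiplicities (such as the coefficient $2$ in the $D_4$ table) and the cross-terms $\chi_i\chi_j$, so one must check with care that no spurious dominant monomial is manufactured at the fork and that the precise thresholds --- for instance $l\prec i\prec j\preceq k$ versus $l\prec k\prec i\prec j$ in (d), and the split $i\neq n+1$ versus $i=n+1$ in (e) --- are exactly the loci on which the product character reduces to its leading term $\bchi_q(L(mm'))$. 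Once all five criteria are established, Theorem~\ref{irretens} promotes them to a determination of all simple objects of $\CC_\xi$ as tensor products of members of $\P$.
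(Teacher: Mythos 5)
Your framework is the same as the paper's: simplicity is detected by comparing dominant monomials of the product $\bchi_q(L(m))\,\bchi_q(L(m'))$ with those of $\bchi_q(L(mm'))$, and non-simplicity is read off from the cluster structure via the identification of renormalized truncated $q$-characters with $F$-polynomials. (One small caveat on the latter: an arbitrary incompatible pair need not satisfy a literal two-term \emph{exchange} relation --- only adjacent clusters do; what the identification actually gives you, and what suffices, is that the product of two incompatible cluster variables expands as a positive sum of at least two cluster monomials, i.e.\ of classes of genuine modules, forcing length $\geq 2$.) However, your proposal has a genuine error in case (a) and stops short of the actual proof in cases (c)--(e). In (a), your claim that the hypothesis $\{i,k\}\neq\{n-1,n\}$ puts all endpoints on a single chain of $\preceq$ is false: the pair $L(n+1,n)\otimes L(n+1,n-1)$ falls under (a) but has incomparable indices $n$ and $n-1$, so the type $A$ argument of Proposition~\ref{tensa} does not apply to it; the paper must (and does) treat this pair separately, by observing that the tensor product is minuscule. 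Minusculeness is likewise how the paper handles the simplicity direction of (b), rather than your ``direct inspection'' of the product character, though there the two verifications amount to the same count.

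The more serious gap is in (c)--(e), where you correctly name the dominant-monomial bookkeeping as the main obstacle but do not carry it out --- and that bookkeeping \emph{is} the proof. For (c) with $j\preceq l\prec k\preceq i$, the paper classifies candidate surplus dominant monomials $m$ by the powers of $A_{n-1,1}^{-1}$ and $A_{n,1}^{-1}$ dividing $mM^{-1}$: when a square such as $A_{n-1,1}^{-2}$ occurs, restriction to $J=\{1,\dots,n-2,n\}$ reduces the question to type $A$ via Lemma~\ref{fact} and Proposition~\ref{tensa}; in the remaining case (both powers equal to $1$) there are exactly four dominant monomials, which the paper lists explicitly with their multiplicities ($5$, $2$, $1$, $1$), e.g.\ $MA_{n,1}^{-1}A_{n-1,1}^{-1}A_{n-2,2}^{-1}\cdots A_{j,n-j}^{-1}$ with multiplicity $5$. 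Each is then shown to occur in $\bchi_q(L(M))$ by the iterated restriction process of \S\ref{tools}: for instance, one first produces $M'=MA_{n,1}^{-1}A_{n-2,2}^{-1}\cdots A_{j,n-j}^{-1}$ inside $L_{\{n,n-2,\dots,j\}}(M)$, checks that $L_{\{n-1,n-2,\dots,j\}}(M')$ occurs in the decomposition (\ref{decompj}) because $M'A_{j,n-j}$ occurs in $L_{\{n-1,\dots,j\}}(MA_{n,1}^{-1})$ while $M'$ does not, and then descends to the target monomial. Without exhibiting this finite list and these chains of restrictions --- and matching multiplicities, which is exactly where your worry about the coefficient $2$ in the $D_4$ table bites --- the argument does not close; your proposal defers precisely this step, so as written it is a correct strategy outline coinciding with the paper's, not a proof.
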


\begin{proof} In each case, the proof of non simplicity follows from the identification of truncated $q$-characters with $F$-polynomials in the last section. So we treat only the proof of the simplicity.

(a) The irreducibility is proved as in type $A$, except for the tensor
product 
\[
L(n+1,n)\otimes L(n+1,n-1)
\] 
which is minuscule and so is simple. 

(b) If $n-2\preceq j = l$ or $i = j$ or $k = l$, $L(n,j)\otimes L(n-1,j)$ is minuscule and so is simple.

(c) By symmetry, we can assume $j\preceq l$. 
Suppose that $j\preceq l\prec k\preceq i$ and let us prove that $L(i,j)^\dag\otimes L(k,l)^\dag$
is simple. Let $M$ be its highest weight monomial. It suffices to prove that any dominant monomial $m$ occurring in 
$\bchi(L(i,j)^\dag)\bchi(L(k,l)^\dag)$ occurs in $\bchi_q(L(M))$.
If $A_{n-1,1}^{-1}$ or $A_{n,1}^{-1}$ is not a factor of $mM^{-1}$, this is proved as for type $A$.
If $A_{n-1,1}^{-2}$ is a factor of $mM^{-1}$, first from \S\ref{tools} 
$M A_{n-1,1}^{-2}$ occurs and $L_J(M A_{n-1,1}^{-2})$ occurs in the decomposition (\ref{decompj}) for $J = \{1,\cdots,n-2,n\}$. But from type $A$
$$L_J(M A_{n-1,1}^{-2}) = Y_{n-1,2}^{-2}L_J(Y_{n,0}Y_{n-2,1}Y_{i,n-i+1}Y_{j,n-j+1}) L_J(Y_{n,0}Y_{n-2,1}Y_{k,n-k+1}Y_{l,n-l+1})$$
and we can conclude by \S\ref{tools}.
This is analog if $A_{n,1}^{-2}$ is a factor. 
So we can assume that $A_{n,1}^{-1}$
and $A_{n-1,1}^{-1}$ are factors with power $1$. Then $m$ is one of the following monomials
$$M A_{n,1}^{-1}A_{n-1,1}^{-1}A_{n-2,2}^{-1}\cdots A_{j,n-j}^{-1}\quad \mbox{with multiplicity}\quad 5,$$
$$M A_{n,1}^{-1}A_{n-1,1}^{-1}A_{n-2,2}^{-1}\cdots A_{l,n-l}^{-1}\quad \mbox{with multiplicity}\quad 2,$$
$$M A_{n,1}^{-1}A_{n-1,1}^{-1}A_{n-2,2}^{-1}\cdots A_{k,n-k}^{-1}\quad \mbox{with multiplicity}\quad 1,$$
$$M A_{n,1}^{-1}A_{n-1,1}^{-1}A_{n-2,2}^{-2}\cdots A_{j,n-j}^{-2}A_{j+1,n-j-1}^{-1}\cdots A_{l,n-l}^{-1}\quad \mbox{with multiplicity}\quad 1.$$
Then we conclude as above. For example for the last monomial of the list, 
\[
M' := MA_{n,1}^{-1}A_{n-2,2}^{-1}\cdots A_{j,n-j}^{-1}
\] 
occurs in $L_{\{n,n-2,\cdots,j\}}(M)$ from type $A$.
Hence $M'A_{j,n-j}$ occurs in $L_{\{n-1,n-2,\cdots,j\}}(MA_{n,1}^{-1})$, but $M'$ does not. So $L_{\{n-1,n-2,\cdots, j\}}(M')$ occurs in the decomposition (\ref{decompj}). 
Since $M$ is a monomial in $L_{\{n-1,n-2,\cdots,1\}}(M')$, we get the result.

(d) and (e) : The proof is analog.


\cqfd
\end{proof}

Proposition~\ref{lastprop} implies that the tensor products of representations of $\P$ corresponding to 
compatible cluster variables are simple. Indeed, 
two cluster variables are compatible if and only if the corresponding diagonals in ${\mathbf P}$ do not cross (with the convention that diameters of the same color do not cross each other) \cite[\S 12.4]{FZ2}.
This coincides with the conditions of Proposition~\ref{lastprop}. 

\begin{example}
{\rm
We continue Example~\ref{exD4}. 
The following table lists the compatible pairs of non frozen variables of~$\A$, and indicates
in which case of Proposition~\ref{lastprop} the corresponding pairs of simple modules fall.

\[
\begin{array}{|c|c||c|c||c|c||c|c|}
\hline
(x_{02},\,x_{03})& (a) &(x_{02},\,x_{0\overline{0}})& (a) & 
(x_{02},\,x_{\widetilde{0\overline{0}}})& (a) &(x_{02},\,x_{2\overline{2}})& (a) \\[1mm]
(x_{02},\,x_{\widetilde{2\overline{2}}}) & (a) & (x_{02},\,x_{2\overline{3}})& (a) & 
(x_{02},\,x_{3\overline{3}})& (a) &(x_{02},\,x_{\widetilde{3\overline{3}}})& (a) \\[1mm]
(x_{03},\,x_{0\overline{0}}) & (a) & (x_{03},\,x_{\widetilde{0\overline{0}}})& (a) &
(x_{03},\,x_{13}) & (a) & (x_{03},\,x_{3\overline{3}})& (a) \\[1mm]
(x_{03},\,x_{\widetilde{3\overline{3}}})  & (a) & (x_{0\overline{0}},\,x_{13}) & (a) & 
(x_{0\overline{0}},\,x_{1\overline{1}}) & (a) &(x_{0\overline{0}},\,x_{2\overline{2}})& (a) \\[1mm]
(x_{0\overline{0}},\,x_{3\overline{3}})& (a) &   (x_{\widetilde{0\overline{0}}},\,x_{13}) & (a) &
(x_{\widetilde{0\overline{0}}},\,x_{\widetilde{1\overline{1}}}) & (a) & (x_{\widetilde{0\overline{0}}},\,x_{\widetilde{2\overline{2}}}) & (a) \\[1mm]
(x_{\widetilde{0\overline{0}}},\,x_{\widetilde{3\overline{3}}}) & (a) &  (x_{13},\,x_{1\overline{1}}) & (a) & 
(x_{13},\,x_{\widetilde{1\overline{1}}}) & (a) & (x_{13},\,x_{1\overline{3}}) & (a) \\[1mm]
(x_{13},\,x_{3\overline{3}}) & (a) & (x_{13},\,x_{\widetilde{3\overline{3}}}) & (a) &
(x_{1\overline{1}},\,x_{1\overline{3}}) & (a) & (x_{1\overline{1}},\,x_{2\overline{2}}) & (a) \\[1mm]
(x_{1\overline{1}},\,x_{3\overline{3}}) & (a) &  (x_{\widetilde{1\overline{1}}},\,x_{1\overline{3}}) & (a) &
(x_{\widetilde{1\overline{1}}},\,x_{\widetilde{2\overline{2}}}) & (a) & (x_{\widetilde{1\overline{1}}},\,x_{\widetilde{3\overline{3}}}) & (a) \\[1mm]
(x_{1\overline{3}},\,x_{2\overline{2}}) & (a) & (x_{1\overline{3}},\,x_{\widetilde{2\overline{2}}}) & (a) &
(x_{1\overline{3}},\,x_{2\overline{3}}) & (a) & (x_{1\overline{3}},\,x_{3\overline{3}}) & (a) \\[1mm]
(x_{1\overline{3}},\,x_{\widetilde{3\overline{3}}}) & (a) & (x_{2\overline{2}},\,x_{2\overline{3}}) & (a) &
(x_{\widetilde{2\overline{2}}},\,x_{2\overline{3}}) & (a) & (x_{2\overline{3}},\,x_{3\overline{3}}) & (a) \\[1mm]
(x_{2\overline{3}},\,x_{\widetilde{3\overline{3}}}) & (a) & (x_{3\overline{3}},\,x_{\widetilde{3\overline{3}}}) & (a) &
(x_{0\overline{0}},\,x_{\widetilde{0\overline{0}}}) & (b) & (x_{1\overline{1}},\,x_{\widetilde{1\overline{1}}}) & (b) \\[1mm]
(x_{2\overline{2}},\,x_{\widetilde{2\overline{2}}}) & (b) & (x_{1\overline{0}},\,x_{2\overline{0}}) & (c) &
(x_{2\overline{0}},\,x_{2\overline{1}}) & (c) & (x_{13},\,x_{1\overline{0}}) & (d) \\[1mm]
(x_{02},\,x_{2\overline{0}}) & (d) & (x_{2\overline{2}},\,x_{2\overline{1}}) & (e) &
(x_{2\overline{2}},\,x_{2\overline{0}})  & (e) & (x_{1\overline{1}},\,x_{2\overline{1}})  & (e) \\[1mm]
(x_{1\overline{1}},\,x_{2\overline{0}}) & (e) & (x_{1\overline{1}},\,x_{1\overline{0}}) & (e) &
(x_{0\overline{0}},\,x_{2\overline{0}}) & (e) & (x_{0\overline{0}},\,x_{1\overline{0}}) & (e) \\[1mm]
(x_{\widetilde{2\overline{2}}},\,x_{2\overline{1}}) & (e) & (x_{\widetilde{2\overline{2}}},\,x_{2\overline{0}}) & (e) &
(x_{\widetilde{1\overline{1}}},\,x_{2\overline{1}}) & (e) & (x_{\widetilde{1\overline{1}}},\,x_{2\overline{0}})  & (e) \\[1mm]
(x_{\widetilde{1\overline{1}}},\,x_{1\overline{0}}) & (e) & (x_{\widetilde{0\overline{0}}},\,x_{2\overline{0}}) & (e) &
(x_{\widetilde{0\overline{0}}},\,x_{1\overline{0}}) & (e) & (x_{3\overline{1}},\,x_{2\overline{1}}) & (e) \\[1mm]
\hline
\end{array} 
\]

}
\end{example}

\medskip
Now, as explained in \S\ref{estacat}, we may conclude that:
\begin{theorem} $\mathcal{C}_\xi$ is a monoidal categorification of the cluster algebra $\A$ of type $D_n$.
\end{theorem}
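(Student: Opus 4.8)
The plan is to read the statement off Definition~\ref{defmoncat}, taking the ring isomorphism $\iota\colon\A\isom\mathcal{R}_\xi$ constructed in the previous subsection as the required isomorphism between $\A$ and the Grothendieck ring of $\mathcal{C}_\xi$. Since the classes $[L(m)]$ of the simple objects, with $m$ running over the dominant monomials of $\Y_\xi$, form a $\Z$-basis of $\mathcal{R}_\xi$, everything reduces to checking that under $\iota$ the cluster monomials of $\A$ are exactly the classes of the real simple objects of $\mathcal{C}_\xi$.

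First I would check that every cluster monomial is the class of a real simple object. A cluster monomial is a product $\prod_r z_r^{a_r}$ of powers of pairwise compatible cluster variables; writing $\iota(z_r)=[L(m_r)]$ with $L(m_r)\in\mathcal{P}$, it is carried by $\iota$ to $\prod_r [L(m_r)]^{a_r}$. By the discussion following Proposition~\ref{lastprop}, compatibility of the variables is exactly the condition ensuring that the tensor products $L(m_r)\otimes L(m_s)$ are simple; since a variable is compatible with itself, all pairwise tensor products among the factors are simple, and Theorem~\ref{irretens} then makes $S:=\bigotimes_r L(m_r)^{\otimes a_r}$ simple. Thus the cluster monomial equals $[S]$. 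Running the same argument on the square $\left(\prod_r z_r^{a_r}\right)^2$, again a cluster monomial on the same cluster, shows that $S\otimes S$ is simple, so $S$ is real.

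The previous step embeds the set of cluster monomials, viewed in $\mathcal{R}_\xi$ through $\iota$, into the basis $\{[L(m)]\}$ of simple classes. Now I would invoke the classical fact that in finite type the cluster monomials form a $\Z$-basis of $\A$; through $\iota$ they therefore form a second $\Z$-basis of $\mathcal{R}_\xi$, contained in the first. Two $\Z$-bases of a free $\Z$-module, one contained in the other, must coincide: each simple class expands as a $\Z$-combination of cluster monomials, which are themselves simple classes, so uniqueness of expansion in the basis $\{[L(m)]\}$ forces it to be a single one of them. Hence the cluster monomials are exactly the classes of all the simple objects. Combined with the second paragraph, and with the fact that a simple object is determined by its class, this shows that the classes of the real simple objects are exactly the cluster monomials, which is Definition~\ref{defmoncat}. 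As by-products, every simple object of $\mathcal{C}_\xi$ is real, and every simple object factorizes as a tensor product of members of $\mathcal{P}$ indexed by a single cluster, which is step~(4) of~\S\ref{estacat}.

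The genuine difficulty is entirely upstream, in Proposition~\ref{lastprop}: establishing the irreducibility criterion for tensor products of two members of $\mathcal{P}$ from the $q$-character computations of the preceding subsections, and matching it exactly with the non-crossing (compatibility) conditions for the centrally symmetric diagonals of the $2n$-gon. Once that criterion, together with the identification of the renormalized truncated $q$-characters with the $F$-polynomials of~\cite{yz, y}, is secured, the passage to arbitrary tensor products is formal via Theorem~\ref{irretens}, and the categorification statement follows from the basis argument above; the only point demanding care there is that $\mathcal{P}$ accounts for all the relevant prime simple objects, which is exactly what the basis argument forces.
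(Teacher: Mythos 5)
Your first half runs parallel to the paper's strategy: the identification of the initial seed, the bijection between cluster variables and $\P'$ via $F$-polynomials, and the implication ``compatible cluster variables $\Rightarrow$ simple tensor product'' via Proposition~\ref{lastprop} together with Theorem~\ref{irretens} are exactly steps (1)--(3) of \S\ref{estacat}, and your observation that realness of the product $S$ comes from applying the same argument to the squared cluster monomial is correct (Proposition~\ref{lastprop} does cover the diagonal pairs $L(m)\otimes L(m)$). Where you genuinely diverge from the paper is the converse inclusion. The paper's step (4) is \emph{constructive}: as in \cite{HL}, one shows that every dominant monomial $m$ of $\Y_\xi$ factors as a product of highest weight monomials of pairwise noncrossing members of $\P$ (the combinatorics of dominant monomials mirrors that of maximal families of noncrossing $\Theta$-orbits of diagonals), and then Theorem~\ref{irretens} gives $L(m)\cong\bigotimes_r L(m_r)^{\otimes a_r}$, so every simple class is a cluster monomial directly. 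You replace this by a basis-sandwich argument: cluster monomials form a $\Z$-basis of $\A$, their $\iota$-images sit inside the basis of simple classes of $\mathcal{R}_\xi$, hence the two bases coincide. The sandwich itself is sound (a spanning subset of a linearly independent set must be the whole set).

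The gap is the imported input. Linear independence of cluster monomials is indeed available (it is quoted in the paper via \cite{CKLP}, and holds for this $\A$ through \cite{GLS1}), but the \emph{spanning} half of your ``classical fact'' --- that the cluster monomials span $\A$ over $\Z$, for this particular coefficient pattern with non-invertible frozen variables $x_{i,i+1}$, $f_{n-1}$, $f_n$ --- is not a citation-free classical statement, and you give no argument or reference for it. Note moreover that the paper's logic runs in the opposite direction: basis-type properties of cluster monomials (Proposition~\ref{positivity}(ii)) are presented as \emph{consequences} of monoidal categorification, so assuming basis-hood as input is exactly the step that risks circularity unless you can source spanning independently (in types $A$ and $D$ one could extract it from repeated use of generalized exchange/skein-type resolutions of crossing diagonals in the polygon model, but that is itself a nontrivial combinatorial argument of roughly the same weight as the factorization of dominant monomials that the paper performs). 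As written, your proof is incomplete at precisely the point where the paper does its remaining work; supply either a proof of spanning or the paper's direct factorization of dominant monomials into pairwise compatible prime monomials, and the rest of your argument closes correctly.
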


\bigskip
\small
\noindent
\begin{tabular}{ll}
David {\sc Hernandez}  
& Universit\'e Paris 7, Institut de Math\'ematiques de Jussieu, CNRS UMR 7586,\\
& 175 Rue du Chevaleret, F-75013 Paris, France \\
& email : {\tt hernandez@math.jussieu.fr}\\[5mm]
Bernard {\sc Leclerc}  & Universit\'e de Caen Basse-Normandie, UMR 6139 LMNO, 14032 Caen, France\\
&CNRS UMR 6139 LMNO, F-14032 Caen, France\\
&Institut Universitaire de France,\\
&email : {\tt bernard.leclerc@unicaen.fr}
\end{tabular}

\end{document}